\newtheorem{theorem}{Theorem}
\newtheorem{definition}{Definition}
\newtheorem{lemma}{Lemma}
\newtheorem{observation}{Observation}
\newtheorem{remark}{Remark}
\newtheorem{example}{Example}
\newtheorem{corollary}{Corollary}
\begin{document}

\nocite{*} 

\title{Fixed points theorems for $b$-enriched multivalued nonexpansive mappings and *-$b$-enriched nonexpansive mappings}

\author{Ioan Trifoi \\
Department of Mathematics and Computer Science\\
North University Center at Baia Mare\\
Technical University of Cluj-Napoca\\
Victoriei 76, 430122 Baia Mare Romania \\
}

\date{the 27th of March 2025} 

\maketitle

\begin{abstract}

The main purpose of this paper is to extend some fixed point results for single valued $b$-enriched nonexpansive mappings to the case of multivalued mappings. 
To this end, we introduce  *-$b$-enriched nonexpansive mappings, as a generalization of *-nonexpansive mappings \cite{Abdul Rahim Khan} for which we establish an existence theorem in Hilbert space.
 We proved weak and strong convergence results of Krasnoselskii iteration process for $b$-enriched multivalued nonexpasive mappings and *-$b$-enriched nonexpansive mappings.

\end{abstract}

\section*{Introduction and preliminaries}

In the sequel, for a metric space $(X,d)$ we denote: $CB(X)$ the family of nonempty closed bounded subsets of $X$, $CC(X)$ the family of nonempty closed and convex subsets of $X$ and $K(X)$ the family of nonempty compact subsets of $X$.\\

Berinde \cite{Vasile Berinde 2010} extended the class of nonexpansive mapping into a more general class of nonexpansive mappings, named enriched nonexpansive mapping  as it follows in the below definition.\\

\begin{definition}[\cite{Vasile Berinde 2010}]
Let $(X,\|\| )$ be a linear normed space. A mapping
$T : X \longrightarrow X$ is said to be an enriched nonexpansive mapping if there
exists $b \in [0,\infty)$ such that
\begin{equation} \label{b-enriched nonexpansive mapping}
\| b(x-y)+Tx-Ty\| \leq (b+1)\|x-y\|
\end{equation}
 $\forall x,y\in X.$\par 
To indicate the constant involved in \eqref{b-enriched nonexpansive mapping} we shall also call T as a $b$-enriched nonexpansive mapping.
\end{definition}
It is easy to see that any nonexpansive mapping T is a $0$-enriched nonexpansive mapping.\\
 The specialized literature records a series of remarkable results obtained by different authors in the fixed point theory by using the enrichment  technique, to improve different classes of mappings\cite{Thabet Abdeljawad Kifayat Ullah Junaid Ahmad Muhammad Arshad Zhenhua Ma}, \cite{Sani Salisu Vasile Berinde Songpon Sriwongsa Poom Kumam}. 

Motivated by the above arguments, some remarkable results obtained for $b$-enriched nonexpansive mappings will be turned into some results for the multivalued extends of $b$-enriched nonexpansive mappings.

There are some challenges raised by turning single valued mappings into multivalued mappings: 
\begin{itemize}
\item
The metric used in multivalued mappings must cover the notion of the distance between two sets. The Pompeiu-Hausdorff metric (distance) plays a crucial role in the fixed point theory of multivalued mappings.  
\item
The multivalued mappings can be defined by transforming the single valued mapping into the multivalued mapping by replacing the single valued mapping metric with the Pompeiu-Hausdorff metric. The $b$-enriched multivalued nonexpansive mappings \cite{Mujahid Abbas Rizwan Anjum Vasile Berinde} extend the $b$-enriched nonexpansive mappings to the multivalued case by this method.
On the other hand, *-$b$-enriched nonexpansive mappings introduced in this paper, use same metric as the single valued mappings. 
\item
The multivalued mappings are more complex as the single valued mappings. The lemmas and the theorems from the single valued mappings do not hold by turning them into multivalued mappings.
\end{itemize}

We recall the point $x \in X$ is a fixed point of the multivalued mapping $T$ if $x \in Tx$. In the sequel, the set of all fixed points of $T$ will be denoted $Fix(T)$ or $F(T)$.

Abbas, Berinde and Anjum  \cite{Mujahid Abbas Rizwan Anjum Vasile Berinde}, extended the $b$-enriched nonexpansive mappings to the multivalued mappings case, by replacing the metric from Hilbert space with Pompeiu-Hausdorrf distance between two sets. On this way they defined the enriched multi-valued nonexpansive mapping as it follows:
\begin{definition} \cite{Mujahid Abbas Rizwan Anjum Vasile Berinde}
Let $(X,\|\|)$ be a liniar normed space. A multi-valued mapping $T:X \longrightarrow CB(X)$ is called $b$-enriched multivalued nonexpansive if there exists $b\in [0,\infty)$
\begin{equation}\label{benrichedmultivaluednonexpansive}
H(bx+Tx,by+Ty) \leq (b+1)\|x-y\|, \forall  x,y \in X
\end{equation}
\end{definition}

Browder's Theorem \cite{Browder FE}  plays an essential role in proving important results in the class of nonexpansive mappings.

Let $H$ be a Hibert space, $C$ a bounded, closed convex subset of $H$ and $T:C \longrightarrow C$ is a nonexpansive mapping (i.e. $\|Tx-Ty\| \leq \|x-y\|$, $\forall x, y \in C$ ).
Then for each $x_0 \in C$ and $\lambda \in [0,1)$, the mapping $T_\lambda: C \longrightarrow C$,  defined  
\begin{equation}\label{tlambdas}
T_\lambda x=(1- \lambda ) x_0+\lambda Tx , x \in C
\end{equation}
 
is a contraction on $C$. \par
According to Banach's Contraction Theorem, the mapping $T_\lambda$ has an unique fixed point denoted $x_\lambda \in C$ thus $x_\lambda = T_\lambda x_\lambda$, expressed thus:
\begin{equation}\label{xlambdas}
x_\lambda=(1- \lambda ) x_0+\lambda Tx_\lambda
\end{equation}

Browder's theorem proves that the sequence $\{x_\lambda\}$ of the fixed points of $T_\lambda$ converges to a fixed point of $T$ as $\lambda \longrightarrow 1$.\par
A problem raised by nonexpansive multivalued mappings is whether Browder's theorem can be extended from the single valued mappings to the multivalued mappings. \\
 Let consider $H$ be a Hibert space, $C$ a bounded, closed convex subset of $H$ and $T:C \longrightarrow CB(C)$ is a multivalued nonexpansive mapping (i.e. $H(Tx,Ty) \leq \|x-y\|$, $\forall x, y \in C$ ).
 
According to Nadler's Theorem \cite{Nadler 1969}, for each $x_0 \in C$, $y\in Tx$ and $\lambda \in [0,1)$, the mapping $T_\lambda: C \longrightarrow CB(C)$,  defined  
\begin{equation}\label{tlambdam}
T_\lambda x=(1- \lambda ) x_0+\lambda y , x \in C
\end{equation}
 
is a multivalued contraction mapping on $C$ and $T_\lambda$ has a (nonunique) fixed point denoted $x_\lambda \in C$ thus $x_\lambda \in T_\lambda x_\lambda$, therefore there is $y_\lambda \in T x_\lambda$ such that
\begin{equation}\label{xlambdam}
x_\lambda=(1- \lambda ) x_0+\lambda y_\lambda , x \in C, 
\end{equation}

The question is whether the sequence $\{ x_\lambda\}$ defined in \eqref{xlambdam} converges to a fixed point of $T$ as $\lambda \longrightarrow 1$.\par

Ko \cite{H M Ko} gave an example, which proves that the Browder's Theorem does not hold in general for the multivalued nonexpansive mappings.

López Acedo and Xu \cite{Genaro Lopez Hong-Kun Xu} used an additional strong condition: $Tp=\{p\}$ for each $p \in F(T)$ (singleton condition for each fixed point of multivalued nonexpansive mapping $T$), where $p$ is called endpoint or stationary point, in order to extend Browder's Theorem into multivalued nonxpansive mappings in Hilbert space. Further the other authors verified Browder's Theorem for multivalued nonexpansive type mappings using the additional singleton condition in Banach space.

The following results, due to Berinde, Abbas  and Anjum \cite{Mujahid Abbas Rizwan Anjum Vasile Berinde}, ensures the existence of fixed point  for $b$-enriched multivalued nonexpansive mappings and moreover they extended Browder's Theorem  \cite{Browder 1965} for the $b$-enriched multivalued nonexpansive mappings.

\begin{theorem}\label{Abbasexistence}\cite{Mujahid Abbas Rizwan Anjum Vasile Berinde}
Let $X$ a uniformly convex Banach space and $D$ a closed convex bounded nonempty subset of $X$. Let $T:D \rightarrow C(D)$ be a $b$-enriched multivalued nonexpansive mapping. Then $T$ has a fixed point (i.e. there exist $x\in D$ with $x \in Tx$ )
\end{theorem}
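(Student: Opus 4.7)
My plan is to reduce the problem to the classical multivalued nonexpansive setting via the averaging device that underlies virtually all enriched-fixed-point proofs. Set
$$\lambda = \frac{1}{b+1} \in (0,1]$$
and define $T_\lambda : D \to C(D)$ by $T_\lambda x = (1-\lambda)x + \lambda Tx$. Because $D$ is convex and $Tx \subseteq D$, each set $(1-\lambda)x + \lambda Tx$ is again a compact convex subset of $D$, so $T_\lambda$ is a well-defined multivalued self-map of $D$ with values in $C(D)$.

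The next step is to show that $T_\lambda$ is multivalued nonexpansive in the ordinary sense. The choice of $\lambda$ is designed so that $1-\lambda = \lambda b$, hence $T_\lambda x = \lambda(bx + Tx)$ and likewise for $T_\lambda y$. Using the positive homogeneity of the Pompeiu--Hausdorff distance together with the $b$-enriched multivalued nonexpansive inequality \eqref{benrichedmultivaluednonexpansive}, one computes
$$H(T_\lambda x, T_\lambda y) \;=\; \lambda\, H(bx+Tx,\; by+Ty) \;\leq\; \lambda(b+1)\|x-y\| \;=\; \|x-y\|,$$
for every $x,y \in D$. I also check that $F(T) = F(T_\lambda)$: if $x \in Tx$, then $x=(1-\lambda)x + \lambda x \in T_\lambda x$; conversely, $x \in T_\lambda x$ means $x=(1-\lambda)x+\lambda y$ for some $y \in Tx$, which forces $\lambda x = \lambda y$, so $x = y \in Tx$.

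With these two reductions in place, $T_\lambda$ is a multivalued nonexpansive mapping with nonempty compact convex values on a nonempty bounded closed convex subset of a uniformly convex Banach space. I then invoke Lim's classical theorem for multivalued nonexpansive mappings in uniformly convex Banach spaces, which guarantees a point $x \in D$ with $x \in T_\lambda x$; by the equality of fixed-point sets this $x$ is a fixed point of $T$.

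The only non-routine ingredient is the appeal to Lim's theorem, which supplies the existence of a fixed point for the averaged nonexpansive mapping; the rest amounts to verifying that the averaging operator $T_\lambda$ inherits the correct value class and the nonexpansive inequality from the enriched hypothesis. The positive homogeneity $H(\lambda A,\lambda B)=\lambda H(A,B)$ is the single algebraic identity needed, and it follows directly from the definition of the Hausdorff distance.
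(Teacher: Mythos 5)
Your proof is correct and follows essentially the same route as the cited source (and the averaging device used throughout this paper): pass to $T_\lambda=(1-\lambda)I+\lambda T$ with $\lambda=\frac{1}{b+1}$, use positive homogeneity of $H$ to show $T_\lambda$ is multivalued nonexpansive with $F(T_\lambda)=F(T)$, and invoke Lim's theorem. The only slip is the word ``convex'' in describing $T_\lambda x$: convexity of the values is neither assumed nor needed, while compactness --- which is what Lim's theorem requires --- is indeed preserved under scaling and translation.
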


\begin{lemma}\label{fpclosedconvex}\cite{Mujahid Abbas Rizwan Anjum Vasile Berinde}
Let $D$ be a closed and convex subset of a Hilbert space $H$. Let $T:D \rightarrow C(D)$ be a $b$-enriched multivalued nonexpansive mapping with $F(T)\neq \emptyset $ and $Tp=\{p\}$ for each $p\in F(T)$. Then, $F(T)$ is closed and convex subset of $D$.
\end{lemma}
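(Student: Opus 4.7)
The plan is to exploit the strong rigidity forced by the singleton condition $Tp=\{p\}$: whenever $p\in F(T)$, the set $bp+Tp$ collapses to the single point $(b+1)p$, which allows us to replace the Pompeiu--Hausdorff distance with an ordinary norm distance on one side and a supremum over $Ty$ on the other. Concretely, for any singleton $\{a\}$ and bounded set $B$, one has $H(\{a\},B)=\sup_{u\in B}\|a-u\|$.

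For closedness, I would take a sequence $\{p_n\}\subset F(T)$ with $p_n\to p\in D$, and apply the $b$-enriched condition \eqref{benrichedmultivaluednonexpansive} with $x=p_n$, $y=p$. Using $Tp_n=\{p_n\}$ this yields
\begin{equation*}
\sup_{y\in Tp}\bigl\|(b+1)p_n-bp-y\bigr\|=H\bigl(\{(b+1)p_n\},\,bp+Tp\bigr)\le(b+1)\|p_n-p\|.
\end{equation*}
Letting $n\to\infty$, the right side tends to $0$, while $(b+1)p_n-bp\to p$; by uniqueness of limits every $y\in Tp$ must equal $p$, so $Tp=\{p\}$ and in particular $p\in F(T)$.

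For convexity, I would fix $p,q\in F(T)$ and $\lambda\in[0,1]$, set $z=\lambda p+(1-\lambda)q$, and apply the enriched condition with the pairs $(p,z)$ and $(q,z)$. Using $Tp=\{p\}$, $Tq=\{q\}$, for every $y\in Tz$ the computation above yields
\begin{equation*}
\bigl\|(b+1)p-(bz+y)\bigr\|\le (b+1)(1-\lambda)\|p-q\|,\qquad \bigl\|(b+1)q-(bz+y)\bigr\|\le (b+1)\lambda\|p-q\|.
\end{equation*}
Adding the two estimates gives exactly $\|(b+1)p-(b+1)q\|$, so the triangle inequality is saturated. This is where the Hilbert (strictly convex) structure enters: equality in the triangle inequality forces the intermediate point $bz+y$ to lie on the segment joining $(b+1)p$ and $(b+1)q$, and matching the two norm bounds pins down the convex combination parameter uniquely as $1-\lambda$. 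Hence $bz+y=(b+1)z$, i.e.\ $y=z$, so $Tz=\{z\}$ and $z\in F(T)$.

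The main obstacle, and the only non-formal step, is the rigidity argument in the convexity part: one must justify that two opposite one-sided norm bounds summing to the full distance between the two endpoints forces the intermediate point to be the specific convex combination $(b+1)z$ rather than just some point on the segment. This is immediate from strict convexity of the Hilbert norm (equivalently, the parallelogram identity), but needs to be invoked explicitly; none of the other steps involve more than substitution into \eqref{benrichedmultivaluednonexpansive} and the singleton simplification of $H$.
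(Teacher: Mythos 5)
Your argument is correct. Note first that the paper does not prove this lemma at all: it is quoted from Abbas--Anjum--Berinde, and the standard route there (and the one used everywhere else in this paper) is to pass to the averaged operator $T_\mu=(1-\mu)I+\mu T$ with $\mu=1/(b+1)$, observe that it is multivalued nonexpansive with $F(T_\mu)=F(T)$ and $T_\mu p=\{p\}$ on fixed points, and then invoke the known closedness/convexity result for multivalued nonexpansive maps satisfying the endpoint condition. You instead work directly with the enriched inequality, which inlines that reduction: your identity $H(\{a\},B)=\sup_{u\in B}\|a-u\|$ correctly converts the Hausdorff bound into pointwise norm bounds over $Ty$, the closedness step is a clean limit argument, and the convexity step is the rigidity argument (both one-sided bounds must be equalities, equality in the triangle inequality in a strictly convex space places $bz+y$ on the segment, and the two matched distances force $bz+y=(b+1)z$, hence $y=z$). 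The only points worth making explicit in a write-up are the degenerate cases $p=q$ and $\lambda\in\{0,1\}$ (trivial), the fact that each individual inequality must be an equality because their sum already attains the triangle-inequality lower bound, and a correct citation of the rigidity fact (strict convexity, or in Hilbert space the equality case of Cauchy--Schwarz; it is not literally ``equivalent to the parallelogram identity''). Your proof has the mild advantage of being self-contained and of showing that the limit point and the convex combination again satisfy $Tz=\{z\}$, not merely $z\in Tz$; the averaging proof has the advantage of reusing machinery already set up for the convergence theorems.
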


These results created the ideal premises to develop iteration schemes to approximate fixed points for the $b$-enriched multivalued nonexpansive mappings. \par

The following useful Lemma have been involved in the main section of this paper.

\begin{lemma}\label{fpTTl}(Remark1 in \cite{Mujahid Abbas Rizwan Anjum Vasile Berinde})
Let $M$ a convex subset of linear space $X$ and $T:M \rightarrow CB(M)$ given by
\[T_\lambda x=(1-\lambda)x+\lambda Tx=\{(1-\lambda)x+\lambda y:y \in Tx\}\]
For each $x \in M$, $T_\lambda x$ is a translation of the set $\lambda Tx$ by the vector $(1-\lambda )x$.Thus
\begin{equation}\label{fixedpointsTTl}
F(T_\lambda)=F(T)
\end{equation} 

\end{lemma}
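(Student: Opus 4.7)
The plan is to prove the two set inclusions separately, exploiting directly the convex-combination structure of $T_\lambda x = \{(1-\lambda)x + \lambda y : y \in Tx\}$. Throughout I will assume $\lambda \in (0,1]$, as is implicit in the Krasnoselskii setting where this lemma is intended to be applied; note that for $\lambda = 0$ one has $T_0 x = \{x\}$ for every $x$, so $F(T_0) = M \ne F(T)$ in general, and this degenerate case must be excluded.

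For the inclusion $F(T) \subseteq F(T_\lambda)$, I would take $x \in F(T)$, so that $x \in Tx$. Then the convex combination $(1-\lambda)x + \lambda x = x$ is a particular element of the set $(1-\lambda)x + \lambda Tx = T_\lambda x$, hence $x \in T_\lambda x$, i.e. $x \in F(T_\lambda)$.

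For the reverse inclusion $F(T_\lambda) \subseteq F(T)$, I would take $x \in F(T_\lambda)$. By the very definition of $T_\lambda x$, the condition $x \in T_\lambda x$ produces a witness $y \in Tx$ such that $x = (1-\lambda)x + \lambda y$. Rearranging yields $\lambda(x - y) = 0$; since $\lambda > 0$ this forces $y = x$, and therefore $x \in Tx$, i.e. $x \in F(T)$.

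There is essentially no technical obstacle: the argument is purely algebraic and uses only the definition of $T_\lambda$ together with the restriction $\lambda \ne 0$. The only point worth flagging is the necessity of excluding $\lambda = 0$ in the statement, which the geometric remark in the lemma (that $T_\lambda x$ is a translation of $\lambda Tx$) implicitly assumes.
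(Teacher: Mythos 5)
Your proof is correct and takes essentially the same route as the paper, which gives no argument of its own beyond citing Remark 1 of the Abbas--Anjum--Berinde paper and noting that $T_\lambda x$ is a translate of $\lambda Tx$; both inclusions reduce, as you show, to the identity $x=(1-\lambda)x+\lambda y$ with $y\in Tx$ forcing $y=x$ when $\lambda\neq 0$. Your explicit exclusion of the degenerate case $\lambda=0$ (where $T_0x=\{x\}$ makes every point of $M$ a fixed point of $T_0$) is a valid and worthwhile precision that the paper leaves implicit.
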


 Shiro Ishikawa \cite{S Ishikawa}, established an useful property in Hilbert space $H$: 

\begin{lemma}\label{lambdapatrate}
Let $H$ be a Hilbert space and $x, y \in H$ and $\lambda \in(0,1)$ then we have:
\begin{equation}\label{lambdapatrate1}
\|\lambda x+(1-\lambda)y\|^2=\lambda\|x\|^2+(1-\lambda)\|y\|^2-\lambda(1-\lambda)\|x-y\|^2
\end{equation}
\end{lemma}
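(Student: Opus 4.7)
The plan is to prove Lemma \ref{lambdapatrate} by a direct computation using the inner product structure of the Hilbert space $H$. Since $\|\cdot\|$ is induced by an inner product $\langle \cdot,\cdot \rangle$, every norm-squared expression can be expanded bilinearly, and the identity reduces to checking that two polynomial expressions in $\|x\|^2$, $\|y\|^2$, and $\langle x,y\rangle$ coincide.

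First, I would expand the left-hand side:
\begin{equation*}
\|\lambda x+(1-\lambda)y\|^{2}=\langle \lambda x+(1-\lambda)y,\lambda x+(1-\lambda)y\rangle =\lambda^{2}\|x\|^{2}+2\lambda(1-\lambda)\langle x,y\rangle+(1-\lambda)^{2}\|y\|^{2}.
\end{equation*}
Next, I would expand the last term on the right-hand side using $\|x-y\|^{2}=\|x\|^{2}-2\langle x,y\rangle+\|y\|^{2}$, substitute, and collect the coefficients of $\|x\|^{2}$, $\|y\|^{2}$ and $\langle x,y\rangle$. For $\|x\|^{2}$ the coefficient becomes $\lambda-\lambda(1-\lambda)=\lambda^{2}$; for $\|y\|^{2}$ it becomes $(1-\lambda)-\lambda(1-\lambda)=(1-\lambda)^{2}$; and for $\langle x,y\rangle$ it becomes $2\lambda(1-\lambda)$. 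This matches the expansion of the left-hand side exactly.

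There is no real obstacle here; the identity is a purely algebraic consequence of the parallelogram/inner-product structure and holds in fact for all $\lambda\in\mathbb{R}$, not just $\lambda\in(0,1)$. The only thing worth noting is that the identity relies on the polarization-type expansion of $\|\cdot\|^{2}$, which is available precisely because $H$ is a Hilbert space; the corresponding equality fails in a general normed space. Thus the proof is a two-line verification, and I would present it as such without any additional machinery.
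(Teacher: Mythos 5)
Your computation is correct: expanding both sides over the inner product and matching the coefficients $\lambda-\lambda(1-\lambda)=\lambda^{2}$, $(1-\lambda)-\lambda(1-\lambda)=(1-\lambda)^{2}$, and $2\lambda(1-\lambda)$ for $\langle x,y\rangle$ is exactly the standard verification of this identity. The paper itself gives no proof (it simply cites Ishikawa), so your two-line argument is the natural one and your remarks that it holds for all real $\lambda$ and depends on the inner-product structure are both accurate.
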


In order to prove the strong convergence of the sequences generated by Krasnoselskii iteration process, we shall use the following hemicompactness property for multivalued mappings defined in \cite{Ismat Beg AR Khan N Hussain}.
\begin{definition}
Let $C$ a subset of a Hilbert space $H$, a mapping $T:C\longrightarrow CB(C)$ is called:
hemicompact if, for each sequence $\{x_n\}$ in $C$, $\{x_n\}$ have a convergent subsequence $\{x_{n_i}\}$ of $\{x_n\}$ and $d(x_n,Tx_n)\longrightarrow 0$ as $n \longrightarrow \infty$.
\end{definition}

The followings definitions and statements are required in the proofing of the theorems from the main section of this paper.

\begin{definition}
Let $P_C(x)=\{z \in C:d(x,z)=d(x,C) \}, \forall x\in X$. Any $z \in P_C(x)$ is called a point of best approximation of x from C.
The set $C$ is called proximal (or sometime proximinal) (Chebyshev)set if $P_C(x)=\{z\}, \forall x\in X$. If $C$ is proximal, then the mapping $P_C:X \longrightarrow 2^C$ is called metric projection.
\end{definition}
\begin{definition} \cite{Abdul Rahim Khan}
Let $T$ a multivalued mapping for each $x \in C$ the multivalued mapping named the best approximation operator $P_T(x)=\{u_x \in Tx:d(x,u_x)=d(x,Tx\}$
\end{definition}
In this paper the best approximation operator $P_T(x)$ will be denoted $P_Tx$.
\begin{definition} \cite{Abdul Rahim Khan}
A single valued mapping (multivalued mapping) $f:C \longrightarrow X  (F:C \longrightarrow 2^X)$ is named selector of T if $f(x) \in Tx (Fx \subseteq Tx)$.
\end{definition}

\begin{lemma} \label{lemma closed convex set Chebyshev}
Every nonempty closed and convex subset of a strictly convex reflexive Banach
space is a Chebyshev set.
\end{lemma}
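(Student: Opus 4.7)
The plan is a standard two-part argument: reflexivity will yield existence of a nearest point, and strict convexity will yield uniqueness.

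For \emph{existence}, fix $x\in X$ and set $d:=d(x,C)=\inf_{y\in C}\|x-y\|$. If $x\in C$ we are done, so assume $x\notin C$. I would pick a minimizing sequence $\{y_n\}\subset C$ with $\|x-y_n\|\to d$, note that $\{y_n\}$ is bounded, and invoke the Kakutani/Eberlein--\v{S}mulian characterization of reflexivity to extract a weakly convergent subsequence $y_{n_k}\rightharpoonup y^\ast\in X$. Since $C$ is closed and convex it is weakly closed (Mazur's theorem), so $y^\ast\in C$. The norm is weakly lower semicontinuous, hence
\[
\|x-y^\ast\|\le\liminf_{k\to\infty}\|x-y_{n_k}\|=d,
\]
and the reverse inequality holds because $y^\ast\in C$. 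Therefore $\|x-y^\ast\|=d$.

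For \emph{uniqueness}, suppose $y_1,y_2\in C$ both achieve the infimum, i.e.\ $\|x-y_1\|=\|x-y_2\|=d$. Convexity of $C$ gives $(y_1+y_2)/2\in C$, so $\|x-(y_1+y_2)/2\|\ge d$; on the other hand the triangle inequality bounds this quantity above by $d$, so equality holds. Setting $u=(x-y_1)/d$ and $v=(x-y_2)/d$, we have two unit vectors whose midpoint also has norm one. Strict convexity of $X$ forces $u=v$, hence $y_1=y_2$.

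I do not expect any real obstacle: the only subtle ingredients are Mazur's theorem (to pass the limit back inside $C$) and the precise formulation of strict convexity. The argument is essentially automatic once those two tools are lined up, and no property of $C$ beyond closedness and convexity is needed.
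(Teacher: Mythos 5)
Your proof is correct and is the standard existence-plus-uniqueness argument (reflexivity giving a weakly convergent minimizing subsequence with Mazur's theorem and weak lower semicontinuity of the norm, then strict convexity forcing uniqueness); the paper states this lemma without proof, as a classical fact, so there is no alternative argument to compare against. The only cosmetic remark is that in the uniqueness step you divide by $d$, which tacitly assumes $d>0$; the case $d=0$ is trivial since closedness of $C$ then gives $x\in C$ and the unique nearest point is $x$ itself.
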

\begin{remark} \label{Banach Hilbert}
We will consider the followings useful statements:
\begin{enumerate}
\item Any weakly convergent sequence $\{x_n\}_{n=0}^\infty$ in Banach space is bounded. It is equivalent with $x_n \rightharpoonup a$ then $\|a\| \leq liminf\|x_n\|$.
\item Any uniformly convex Banach space is a strictly convex Banach space
\item Any Hilbert space is uniformly convex Banach space, also it is a most important example of uniformly convex Banach space. 

\end{enumerate}
\end{remark}

\section{Main results} \label{main}

The following theorem introduce an useful property for Krasnoselskii iteration sequence:

\begin{theorem}\label{convbenm}
Let $C$ be a closed and convex subset of a Hilbert space $H$. Let $T:C \rightarrow CB(C)$ be a $b$-enriched multivalued nonexpansive mapping with $F(T)\neq \emptyset $ and $Tp=\{p\}$ for each $p\in F(T)$. For any fixed $x_0 \in C$ and an arbitrary $\theta \in (0,1)$, define the sequence $\{x_n\}_{n=0}^ \infty$ by:
\begin{equation}\label{Krasnoselskiibenm}
x_{n+1}=(1-\theta)x_n+\theta y_n, \; n\geq 0
\end{equation}
where $y_n\in Tx_n$. Then $\lim_{n\to \infty} d(x_n, Tx_n)=0$
\end{theorem}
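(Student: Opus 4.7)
The natural strategy is to pass from the $b$-enriched mapping $T$ to its averaged companion $T_\lambda$ with $\lambda = 1/(b+1)$, which is multivalued \emph{nonexpansive}, and then run a standard Krasnoselskii-style argument on $T_\lambda$. Concretely, dividing \eqref{benrichedmultivaluednonexpansive} by $b+1$ and using $T_\lambda x = (1-\lambda)x + \lambda Tx = \lambda(bx + Tx)$ gives $H(T_\lambda x, T_\lambda y) \leq \|x-y\|$ for all $x,y \in C$. By Lemma \ref{fpTTl}, $F(T_\lambda) = F(T) \neq \emptyset$, and since $Tp = \{p\}$ for every $p \in F(T)$, the singleton condition $T_\lambda p = \{p\}$ carries over to the averaged mapping as well.

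Next I would rewrite the Krasnoselskii iterates in terms of $T_\lambda$. For $y_n \in Tx_n$, set $z_n := (1-\lambda)x_n + \lambda y_n$, so that $z_n \in T_\lambda x_n$. A direct computation yields $x_{n+1} = (1-\mu) x_n + \mu z_n$ with $\mu := \theta/\lambda = \theta(b+1)$, together with the useful identity $\|x_n - z_n\| = \lambda \|x_n - y_n\|$. For the telescoping estimate that follows one needs $\mu \in (0,1)$, so the intended range of $\theta$ should be $(0, 1/(b+1))$, which specializes to $(0,1)$ in the classical case $b = 0$.

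Fix $p \in F(T)$. Applying Ishikawa's identity (Lemma \ref{lambdapatrate}) to $\|x_{n+1}-p\|^2 = \|(1-\mu)(x_n-p) + \mu(z_n-p)\|^2$ and using $\|z_n - p\| \leq H(T_\lambda x_n, T_\lambda p) \leq \|x_n - p\|$ (valid because $T_\lambda p = \{p\}$), I obtain
\[
\|x_{n+1}-p\|^2 \;\leq\; \|x_n-p\|^2 \;-\; \mu(1-\mu)\,\|x_n-z_n\|^2.
\]
Summing this inequality over $n$ gives $\mu(1-\mu)\sum_{n=0}^{\infty}\|x_n-z_n\|^2 \leq \|x_0-p\|^2 < \infty$, hence $\|x_n-z_n\| \to 0$. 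Via the identity $\|x_n-z_n\| = \lambda\|x_n-y_n\|$ this forces $\|x_n-y_n\| \to 0$, and since $y_n \in Tx_n$, $d(x_n, Tx_n) \leq \|x_n - y_n\| \to 0$.

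The main subtlety is that the averaging coefficient in the decomposition must be the enrichment exponent $\lambda = 1/(b+1)$ rather than the iteration parameter $\theta$: only this choice promotes $T$ to a nonexpansive multivalued map and lets the singleton condition feed into the Ishikawa identity to produce the quadratic decrease. Pinning down the admissible range of $\theta$ (so that $\mu = \theta(b+1) \in (0,1)$) is the only delicate hypothesis-level issue; once that is settled the argument is a clean transfer of the classical multivalued Krasnoselskii proof through the averaged map.
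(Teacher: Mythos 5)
Your proposal is correct and follows essentially the same route as the paper: pass to the averaged nonexpansive map with coefficient $1/(b+1)$, rewrite the iteration as a Krasnoselskii scheme for that map, and combine the singleton condition with Ishikawa's identity (Lemma \ref{lambdapatrate}) to get the quadratic decrease; the only cosmetic difference is that you telescope the resulting inequality, while the paper argues from the existence of $\lim\|x_n-p\|$. Your observation that the decomposition forces $\theta(b+1)\in(0,1)$, so that $\theta$ cannot really be arbitrary in $(0,1)$ unless $b=0$, is a genuine point that the paper's own proof quietly assumes without stating.
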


\begin{proof}
Let $\theta=\lambda \mu$ as $\mu=\frac{1}{b+1}$. Substituting  $b=\frac{1}{\mu}-1$ into the following inequality: 
\begin{equation}\label{benrichedmultivaluednonexpansive}
H(bx+Tx,by+Ty) \leq (b+1)\|x-y\|
\end{equation}

we get 
\[H((1-\mu)x+\mu Tx,(1-\mu)y+\mu Ty) \leq \|x-y\|\] $\forall x,y\in C$. In the following we will denote the averaged multivalued mapping $T_\mu: C\rightarrow CB(C)$ as $T_\mu x=(1-\mu)x+\mu Tx$. 
Thus the inequality \eqref{benrichedmultivaluednonexpansive} will turn into:
\[H(T_\mu x,T_\mu y)\leq\|x-y\|\] $\forall x,y \in C$\\
It is easy to observe that the averaged multivalued mapping $T_\mu$ is a multivalued  nonexpansive mapping.\\
The Krasnoselskii iteration sequence \eqref{Krasnoselskiibenm} will turn into:
\[x_{n+1}=(1-\lambda)x_n+\lambda y_{\mu n}\]
where: $\theta=\lambda \mu$, $y_{\mu n} \in T_\mu x_n$ and $\lambda \in (0,1)$ such that $y_{\mu n}=(1-\mu)x_n+\mu y_n$ .
According to Lemma \ref{fpTTl} $F(T)=F(T_\mu)$ (i.e. for each $p \in F(T)$ we have $Tp=T_\mu p=\{p\}$).
Let $x \in C$ and $p \in C$ as $Tp=T_\mu p=\{p\}$ using the Pompeiu-Hausdorff distance we get:
\begin{equation}\label{PHineg}
\begin{split}
H(T_\mu x,T_\mu p) {}&  =  max\{ \underset {y \in T_\mu x} \sup d(y,T_\mu p), \underset{p \in T_\mu p} \sup d(p,T_\mu x)  \}=max\{ \underset {y \in T_\mu x} \sup \|y-p\|, \underset{p \in T_\mu p} \sup d(p,T_\mu x)  \} \\ {}&  = \underset {y \in T_\mu x} \sup \|y-p\|  \geq \|y-p\| \; \forall y \in T_\mu x
\end{split}
\end{equation}
 In the sequel we will prove that the sequence $\{x_n\}$ is bounded.\\
From the nonexpansivness of $T_\mu$ and \eqref{PHineg} we get:

\begin{equation*}
\begin{split}
 \|x_{n+1}-p\| {}&  \leq \|(1-\lambda)(x_n-p)+\lambda (y_{\mu n}-p)\|  \leq (1-\lambda)\|x_n-p\|+\lambda \|y_{\mu n}-p\|  \\
{}& \leq (1-\lambda)\|x_n-p\|+\lambda H(T_{\mu}x_n,T_{\mu}p)\leq (1-\lambda)\|x_n-p\|+\lambda \|x_n-p\|=\|x_n-p\|
 \end{split}
\end{equation*}

That prove $\{x_n\}$ is bounded and $\lim \|x_n-p\|$ exists.\\
Applying \eqref{lambdapatrate1} from (Lemma \ref{lambdapatrate}) we obtain:

\begin{equation}\label{ineq1}
\begin{split}
\|x_{n+1}-p\|^2 {}& \leq \|(1-\lambda)(x_n-p)+\lambda (y_{\mu n}-p)\|^2  \\ {}&  \leq(1-\lambda)\|x_n-p\|^2+\lambda \|y_{\mu n}-p\|^2-\lambda(1-\lambda)\|x_n-y_{\mu n}\|^2 \\{}&  \leq (1-\lambda)\|x_n-p\|^2+\lambda H^2(T_\mu x_n,T_\mu p)-\lambda(1-\lambda)\|x_n-y_{\mu n}\|^2 \\{}&   \leq  (1-\lambda)\|x_n-p\|^2+\lambda \|x_n-p\|^2-\lambda(1-\lambda)\|x_n-y_{\mu n}\|^2  \\{}&  = \|x_n-p\|^2-\lambda(1-\lambda)\|x_n-y_{\mu n}\|^2, \; \forall n\geq 0
\end{split}
\end{equation}

The above inequality \eqref{ineq1} will be rewritten as it follows:
\begin{equation} \label{ineq2}
\lambda(1-\lambda)\|x_n-y_{\mu n}\|^2 \leq \|x_n-p\|^2-\|x_{n+1}-p\|^2 
\end{equation}
We will recall that $\lim_{n\to \infty} \|x_n-p\|=\lim_{n\to \infty} \|x_{n+1}-p\|$ for $\lambda \in (0,1)$, which together with the inequality \eqref{ineq2}  yield $\lim_{n\to \infty}\|x_n-y_{\mu n}\|=0$.
Since $y_{\mu n} \in T_\mu x_n$ we get $\lim_{n \to \infty} d(x_n,  T_{\mu}x_n)=0$.\\
On the other hand $\lim_{n\to \infty}\|x_n-y_{\mu n}\|=\lim_{n\to \infty}\|x_n-(1-\mu)x_n-\mu y_n\|=\lim_{n\to \infty}\|\mu x_n-\mu y_n\|=\lim_{n\to \infty}\|x_n- y_n\|=0$, where $y_n \in Tx_n$ we get $\lim_{n\to \infty}d(x_n, Tx_n)=0$

\end{proof}

The following theorem establish some conditions, which include the hemicompactness condition, under which Krasnoselskii iteration \eqref{Krasnoselskiibenm} converges strongly to a fixed point of $b$-enriched multivalued nonexpansive mapping:

\begin{theorem}\label{strongconvbemnm}
Let $C$ be a nonempty closed and convex subset of a Hilbert space $H$. Let $T:C \rightarrow CB(C)$ be a $b$-enriched multivalued nonexpansive mapping  such as $Tp=\{p\}$ for each $p\in F(T)$. Suppose $T$ is hemicompact and continuous with respect of Pompeiu Hausdorff metric. For any fixed $x_0 \in C$ and an arbitrary $\theta \in (0,1)$, define the sequence $\{x_n\}_{n=0}^ \infty$ by:
\begin{equation}\label{Krasnoselskiibenm02}
x_{n+1}=(1-\theta)x_n+\theta y_n, \; n\geq 0
\end{equation}
where $y_n\in Tx_n$. Then the sequence $\{x_n\}$ converges strongly to a fixed point of $T$.
\end{theorem}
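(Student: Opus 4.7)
The plan is to combine Theorem \ref{convbenm} with the hemicompactness hypothesis to extract a convergent subsequence, then upgrade to full convergence via the Fejér-monotone estimate that was already established in the proof of Theorem \ref{convbenm}.

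First I would invoke Theorem \ref{convbenm} directly, since its hypotheses (apart from $F(T)\neq\emptyset$) are satisfied, to conclude that $d(x_n,Tx_n)\to 0$ as $n\to\infty$. Next, because $T$ is hemicompact, there is a subsequence $\{x_{n_i}\}$ of $\{x_n\}$ and a point $q\in C$ (using that $C$ is closed) with $x_{n_i}\to q$. I would then verify $q\in F(T)$ by the triangle-type estimate
\begin{equation*}
d(q,Tq)\leq \|q-x_{n_i}\|+d(x_{n_i},Tx_{n_i})+H(Tx_{n_i},Tq),
\end{equation*}
where the first term tends to $0$ by the choice of the subsequence, the second by Theorem \ref{convbenm}, and the third by the assumed Pompeiu--Hausdorff continuity of $T$ at $q$. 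Since $Tq$ is closed and $d(q,Tq)=0$, we obtain $q\in Tq$, so $q\in F(T)$. In particular $F(T)$ is nonempty, which retroactively justifies applying Theorem \ref{convbenm} in the first place (alternatively, one can note that the boundedness/monotonicity of $\|x_n-p\|$ was only needed once $F(T)\neq\emptyset$ is known; the estimate $d(x_n,Tx_n)\to 0$ itself only requires the existence of $q$ we just produced).

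Now I would promote the subsequential convergence to full-sequence convergence. The singleton condition $Tq=\{q\}$ (which holds since $q\in F(T)$) allows me to reuse the computation from the proof of Theorem \ref{convbenm}: in the averaged form $T_\mu$ we have
\begin{equation*}
\|x_{n+1}-q\|\leq \|x_n-q\|,\qquad n\geq 0,
\end{equation*}
so the sequence $\{\|x_n-q\|\}$ is nonincreasing and hence has a limit. But the subsequence $\|x_{n_i}-q\|\to 0$, so this limit must be $0$, and therefore $x_n\to q$ strongly.

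The main obstacle, such as it is, is the passage $q\in F(T)$: one must be careful that \emph{continuity of $T$ with respect to the Pompeiu--Hausdorff metric} is exactly what makes $H(Tx_{n_i},Tq)\to 0$ follow from $x_{n_i}\to q$. Everything else is a clean application of hemicompactness together with the Fejér-type monotonicity already embedded in the proof of Theorem \ref{convbenm}; no new inequality for the $b$-enriched class is needed.
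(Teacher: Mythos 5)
Your proposal is correct and follows essentially the same route as the paper: invoke Theorem \ref{convbenm} for $d(x_n,Tx_n)\to 0$, use hemicompactness to extract $x_{n_i}\to q$, use Pompeiu--Hausdorff continuity to get $q\in F(T)$, and then conclude from the monotonicity of $\|x_n-q\|$ established in the proof of Theorem \ref{convbenm}. You are in fact slightly more careful than the paper in flagging that $F(T)\neq\emptyset$ is a hypothesis of Theorem \ref{convbenm} that is not listed here and must be accounted for.
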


\begin{proof}
In accordance with the Theorem \ref{convbenm} we have $\lim_{n\to\infty}d(x_n,Tx_n)=0$ and thus $\{x_n\}$ is bounded. Since $\{x_n\}$ is bounded and $T$ is hemicompact,  hence there exists a convergent subsequence $\{x_{n_i}\}$ of $\{x_n\}$ such as there exists some $q\in C$ as $x_{n_i}\longrightarrow q$ with $i \longrightarrow \infty$. Since $T$ is continuous with respect of Pompeiu Hausdorff metric we have $d(x_{n_i},Tx_{n_i})\longrightarrow d(q,Tq)$ with $i \longrightarrow \infty$ hence $d(q,Tq)=0$ and from closedness of $Tq$ we have $q\in F(T)$. From the proofing of the Theorem \ref{convbenm}, putting $q$ instead $p$ we get $\lim_{n\to\infty}\|x_n-q\|$ exists, following the conclusion: $\{x_n\}$ converges strongly to $q$.

\end{proof}

\begin{corollary}\label{strongconvbemnm01}
Let $K$ be a nonempty compact and convex subset of a Hilbert space $H$. Let $T:K \rightarrow CB(K)$ be a $b$-enriched multivalued nonexpansive mapping  such as $Tp=\{p\}$ for each $p\in F(T)$. Suppose $T$ is continuous with respect of Pompeiu Hausdorff metric. For any fixed $x_0 \in C$ and an arbitrary $\theta \in (0,1)$, define the sequence $\{x_n\}_{n=0}^ \infty$ by:
\begin{equation}\label{Krasnoselskiibenm021}
x_{n+1}=(1-\theta)x_n+\theta y_n, \; n\geq 0
\end{equation}
where $y_n\in Tx_n$. Then the sequence $\{x_n\}$ converges strongly to a fixed point of $T$.
\end{corollary}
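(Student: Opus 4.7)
The plan is to derive the Corollary directly from Theorem~\ref{strongconvbemnm} by observing that the only hypothesis of that theorem not literally stated in the Corollary, namely hemicompactness of $T$, comes for free when the domain $K$ is compact. So the strategy reduces to checking hemicompactness and then invoking Theorem~\ref{strongconvbemnm} verbatim on the Krasnoselskii sequence $\{x_n\}$.

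The key step is the verification of hemicompactness. Unwinding the definition, $T$ is hemicompact if every sequence $\{x_n\}\subset K$ with $d(x_n,Tx_n)\to 0$ admits a convergent subsequence. Since $K$ is compact, every sequence in $K$ already admits a convergent subsequence, so the asymptotic condition on $d(x_n,Tx_n)$ is irrelevant and hemicompactness holds automatically; this is the sole content I would actually have to write down.

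A secondary bookkeeping point is that the proof of Theorem~\ref{strongconvbemnm} relies on Theorem~\ref{convbenm}, which requires $F(T)\neq\emptyset$. In the present setting this is also free: by Remark~\ref{Banach Hilbert} a Hilbert space is uniformly convex Banach, the compact convex set $K$ is nonempty, closed and bounded, and every closed subset of the compact set $K$ is itself compact, so the map $T:K\to CB(K)$ in fact takes values in the compact subsets of $K$. Theorem~\ref{Abbasexistence} then yields $F(T)\neq\emptyset$, so the endpoint hypothesis $Tp=\{p\}$ is not vacuous and the chain of invocations goes through.

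With all hypotheses of Theorem~\ref{strongconvbemnm} in place ($K$ closed convex, $T$ a $b$-enriched multivalued nonexpansive mapping with the endpoint condition on $F(T)$, $T$ continuous in the Pompeiu--Hausdorff metric, and now $T$ hemicompact), its conclusion gives strong convergence of $\{x_n\}$ to a fixed point of $T$. The only real obstacle, if any, is the care needed to parse the hemicompactness definition and to notice the implicit nonemptiness of $F(T)$; no new analytic estimate is required.
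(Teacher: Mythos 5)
Your proposal is correct and follows exactly the paper's own route: observe that compactness of $K$ makes hemicompactness of $T$ automatic, then apply Theorem~\ref{strongconvbemnm}. Your additional check that $F(T)\neq\emptyset$ (via Theorem~\ref{Abbasexistence}, since closed subsets of the compact set $K$ are compact) is a sensible piece of bookkeeping the paper omits, but it does not change the argument.
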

\begin{proof}
The assumption $K$ is compact implies immediately the mapping $T:K \rightarrow CB(K)$ is hemicompact. The proof follows from the Theorem \ref{strongconvbemnm}.
\end{proof}

In general, without the hemicompactness condition, the strong convergence of the sequence $x_n$ generated by the Krasnoselskii iteration to a fixed point of the $b$-enriched nonexpansive mapping  cannot be ensured, but, in the next theorem there are some conditions, which ensure the weakly convergence:

\begin{theorem}\label{weaklyconvbenm}
Let $C$ be a weakly compact convex subset of a Hilbert space $H$. Let $T:C \rightarrow K(C)$ be a $b$-enriched multivalued nonexpansive mapping with $F(T)\neq \emptyset $ and $Tp=\{p\}$ for each $p\in F(T)$. For any fixed $x_0 \in C$ and an arbitrary $\theta \in (0,1)$, define the sequence $\{x_n\}_{n=0}^ \infty$ by:
\begin{equation}\label{Krasnoselskiibenm01}
x_{n+1}=(1-\theta)x_n+\theta y_n, \; n\geq 0
\end{equation}
where $y_n\in Tx_n$. Then $\{x_n\}$ converges weakly to a fixed point of $T$.
\end{theorem}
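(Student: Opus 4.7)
The plan is to build on the machinery already deployed in Theorem \ref{convbenm}: first reduce the $b$-enriched multivalued nonexpansive mapping $T$ to the averaged multivalued nonexpansive mapping $T_\mu$ with $\mu=\frac{1}{b+1}$, so that the Krasnoselskii iteration \eqref{Krasnoselskiibenm01} becomes a Mann-type iteration for $T_\mu$. From the proof of Theorem \ref{convbenm} we already have two crucial facts for free: $\lim_{n\to\infty}\|x_n-p\|$ exists for every $p\in F(T)=F(T_\mu)$, and $\lim_{n\to\infty} d(x_n,T_\mu x_n)=0$ (equivalently $\lim_{n\to\infty} d(x_n,Tx_n)=0$). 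Note that since $T$ takes values in $K(C)$, the averaged mapping $T_\mu x=(1-\mu)x+\mu Tx$ also takes values in $K(C)$.

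Next, I would use the weak compactness of $C$ to extract a subsequence $\{x_{n_k}\}$ with $x_{n_k}\rightharpoonup q$ for some $q\in C$. The key step is to show $q\in F(T)$. For this, I would invoke the demiclosedness principle for multivalued nonexpansive mappings with compact values in a Hilbert space: if $T_\mu:C\to K(C)$ is multivalued nonexpansive, then $I-T_\mu$ is demiclosed at $0$, i.e.\ whenever $x_{n_k}\rightharpoonup q$ and $d(x_{n_k},T_\mu x_{n_k})\to 0$ we have $q\in T_\mu q$. Combined with Lemma \ref{fpTTl}, this gives $q\in F(T_\mu)=F(T)$.

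Finally, to upgrade weak subsequential convergence to weak convergence of the whole sequence, I would use Opial's condition in the Hilbert space $H$. Suppose two subsequences satisfy $x_{n_k}\rightharpoonup q_1$ and $x_{m_j}\rightharpoonup q_2$ with $q_1,q_2\in F(T)$ and $q_1\neq q_2$. Since $\lim_{n\to\infty}\|x_n-q_1\|$ and $\lim_{n\to\infty}\|x_n-q_2\|$ both exist, Opial's inequality yields
\begin{equation*}
\lim_{n\to\infty}\|x_n-q_1\|=\liminf_{k\to\infty}\|x_{n_k}-q_1\|<\liminf_{k\to\infty}\|x_{n_k}-q_2\|=\lim_{n\to\infty}\|x_n-q_2\|,
\end{equation*}
and by symmetry the reverse strict inequality, a contradiction. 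Hence every weakly convergent subsequence has the same limit $q\in F(T)$, and since $\{x_n\}$ lies in the weakly compact set $C$, it follows that $x_n\rightharpoonup q$.

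The main obstacle is the demiclosedness step. The compactness hypothesis on the values of $T$ (and hence of $T_\mu$) is precisely what is needed to make the standard single-valued argument go through: given $x_{n_k}\rightharpoonup q$ and $d(x_{n_k},T_\mu x_{n_k})\to 0$, one selects $z_{n_k}\in T_\mu x_{n_k}$ realizing the distance, and the compactness of $T_\mu q$ together with the nonexpansiveness of $T_\mu$ (which forces $\|z_{n_k}-T_\mu q\|$ to be controlled by $\|x_{n_k}-q\|$ up to a vanishing term) allows one to invoke Opial's lemma to conclude $q\in T_\mu q$. Without compact values this inference fails, which explains the hypothesis $T:C\to K(C)$.
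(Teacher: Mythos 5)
Your proposal is correct and follows essentially the same route as the paper: reduction to the averaged nonexpansive mapping $T_\mu$, reuse of the facts $\lim_n\|x_n-p\|$ exists and $d(x_n,T_\mu x_n)\to 0$ from Theorem \ref{convbenm}, a demiclosedness argument for $I-T_\mu$ using the compactness of the values together with Opial's property, and a second application of Opial's property to pass from subsequential to full weak convergence. The paper simply carries out the demiclosedness step inline as a proof by contradiction (selecting nearest points in $T_\mu p_0$ and letting them converge in the compact set) rather than citing it as a named principle, but the content is identical.
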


\begin{proof}
The averaged multivalued mapping $T_\mu$ defined into the proofing of Theorem \ref{convbenm} is a multivalued nonexpansive mapping. The averaged multivalued mapping's image $T_\mu x$ is a translation of the set $ \mu Tx$ by vector $(1-\mu) x$. 
From the proofing of Theorem \ref{convbenm} we have the followings: $\{x_n\}$ defined in \eqref{Krasnoselskiibenm01} is bounded, $\lim \|x_n-p\|$ exists for each $p \in F(T)$, $\lim_{n\to \infty}d(x_n, Tx_n)=0$ and $\lim_{n\to \infty}d(x_n, T_\mu x_n)=0$. According Lemma \ref{fpTTl} $F(T)=F(T_\mu)$ (i.e. $Tp=T_\mu p=\{p\}$). Since $C$ is weakly compact, there exists a subsequence $\{x_{n_i}\}$ of $x_n$ such as $x_{n_i} \rightharpoonup p_0$ for $p_0 \in C$. Suppose $p_0 \not\in Tp_0$ and $p_0 \not\in T_\mu p_0$. From the compactness of $Tp_0$ there exists $y_i\in Tp_0$ and $y_{\mu_i}\in T_\mu p_0$ such as $\|x_{n_i}-y_i\|=d(x_{n_i},Tp_0)$ as $y_i \to y \in Tp_0$,  $\|x_{n_i}-y_{\mu_i}\|=d(x_{n_i},T_\mu p_0)$ as $y_{\mu_i} \to y_\mu \in T_\mu p_0$   hence $p_0 \neq y$ and $p_0 \neq y_\mu$.  The Hilbert space $H$ have Opial's property. This property will be applied in the following inequalities:

\begin{equation*}
\begin{split}
\limsup\|x_{n_i}-y_\mu\| {}& \leq \limsup[\|x_{n_i}-y_{\mu_i}\|+\|y_{\mu_i}-y_\mu\|]=\limsup\|x_{n_i}-y_{\mu_i}\|=\limsup d(x_{n_i},T_\mu p_0)  \\{}& \leq \limsup[d(x_{n_i},T_\mu x_{n_i})+H(T_\mu x_{n_i},T_\mu p_0)],\leq \limsup \|x_{n_i}-p_0\| <\limsup\|x_{n_i}-y_\mu\|
\end{split}
\end{equation*}

This is a contradiction, hence $p_0\in T_\mu p_0$ and according to Lemma \ref{fpTTl}, in addition we obtain $p_0 \in T p_0$.\\
In the sequel we will prove $x_n \rightharpoonup p_0$. We will suppose that $\{x_n\}$ does not converge weakly to $p_0$. There exists another subsequence of $\{x_n\}$ denoted $\{x_{n_k}\}$, such as $x_{n_k}\rightharpoonup p \neq p_0$, where $p \in Tx$. Using Opial's property, which hold in Hilbert space, we have the followings:
\begin{equation*}
\begin{split}
\lim_{n\to\infty}\|x_n-p\| {}&=\limsup_{k\to\infty}\|x_{n_k}-p\|<\limsup_{k\to\infty}\|x_{n_k}-p_0\|=\limsup_{i\to\infty}\|x_{n_i}-p_0\|   \\{}&  <\limsup_{i\to\infty}\|x_{n_i}-p\|=\lim_{n\to\infty}\|x_n-p\|
\end{split}
\end{equation*}

We obtain a contradiction, hence $x_n \rightharpoonup p_0$

\end{proof}

The following example shows a class of $b$-enriched multivalued nonexpansive mapping: 

\begin{example}
Let $C=[\frac{1}{2},1]$ and a multivalued mapping $T:C\longrightarrow K(C)$ as $Tx=[\frac{1}{x},\frac{1}{x^2}]$. Let $x=\frac{1}{2}, \; y=1$. Then $T\frac{1}{2}=[2,4]$ and $T1=\{1\}$ so $1\in F(T)$. Now will prove $T$ is not a multivalued nonexpansive mapping   $H(T \frac{1}{2},T1)=max\{|2-1|,|4-1|\}=3$ result $3>|1-\frac{1}{2}|$ hence $T$ is not multivalued nonexpansive mapping and also $T$ is not multivalued quasi-nonexpansive mapping. \\
Now we will prove $T$ is a $b$-enriched multivalued nonexpansive mapping.
Let $\frac{1}{2}\leq x<y\leq 1$ thus we have $H(bx+Tx,by+ty)=H([bx+\frac{1}{x},bx+\frac{1}{x^2}],[by+\frac{1}{y},by+\frac{1}{y^2}])=max\{|bx+\frac{1}{x}-by-\frac{1}{y^2}|,|by+\frac{1}{y}-bx-\frac{1}{x^2}|\}=|bx+\frac{1}{x}-by-\frac{1}{y^2}|\}$. Using the inequality $\frac{y^2-x}{xy^2}<\frac{y^2-x^2}{x^2y^2}$, which is true for $x<1$, we obtain $|bx+\frac{1}{x}-by-\frac{1}{y^2}|\leq (b+1)|y-x|$ which is equivalent with 
\begin{equation}\label{ineqex}
|b(x-y)+\frac{y^2-x}{xy^2}|\leq |b(x-y)+\frac{y^2-x^2}{x^2y^2}|=|x-y||b-\frac{x+y}{x^2y^2}|
\end{equation}
.
 Substituing the inequality \eqref{ineqex} into inequality \eqref{benrichedmultivaluednonexpansive} (the definition of $b$-enriched multivalued nonexpansive mappings) we obtain $|x-y||b-\frac{x+y}{x^2y^2}|\leq (b+1)|y-x|$ hence $T$ is $\frac{5}{2}$-enriched multivalued nonexpansive mapping.

\end{example}

A way to avoid the Pompeiu-Hausdorff metric and the strong condition $Tp=\{p\}$ implied in $b$-enriched multivalued nonexpansive mappings, is to extend the $b$-enriched nonexpansive mapping to multivalued mapping case, by using a different definition for multivalued mapping. \par

The *-nonexpansive mapping was introduced and studied by Husain and Latif in \cite{T. Husain and A. Latif}. 

The following definition introduce the *-$b$ enriched nonexpansive mappings by extending the *-nonexpansive multivalued mappings for $b$-enriched nonexpansive mappings.

\begin{definition}
A multivalued mapping $T:C \longrightarrow 2^X, C\subset X$ is said *-$b$-enriched nonexpansive mapping if there
exists $b \in [0,\infty)$ such that for all $x,y \in C$ and $u_x \in Tx$ with $d(x,u_x)=d(x,Tx)$ there exists $u_y \in Ty$ with $d(y,u_y)=d(y,Ty)$ such as  
\begin{equation} \label{*-b-enriched nonexpansive mapping}
 \|(b(x-y)+(u_x-u_y)\|\leq (b+1)\|x-y\|
\end{equation}
 $\forall x,y\in X.$ .
\end{definition}

\begin{observation}
If $b=0$ then the multivalued mapping *-$b$-enriched nonexpansive mapping is *- nonexpansive mapping (i.e. a *- nonexpansive mapping is a *- $0$-enriched nonexpansive mapping).
\end{observation}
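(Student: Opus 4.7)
The observation is essentially a definitional unpacking, so my plan is short and the proof itself should be a direct verification. The approach is to substitute $b=0$ into the defining inequality \eqref{*-b-enriched nonexpansive mapping} and check that what remains is exactly the defining inequality of a *-nonexpansive mapping in the sense of Husain and Latif.

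Concretely, I would first recall the definition of *-nonexpansive mapping from \cite{T. Husain and A. Latif}: a multivalued mapping $T:C \longrightarrow 2^X$ is *-nonexpansive if for all $x,y\in C$ and every $u_x \in Tx$ with $d(x,u_x)=d(x,Tx)$, there exists $u_y \in Ty$ with $d(y,u_y)=d(y,Ty)$ such that
\[
\|u_x - u_y\| \leq \|x-y\|.
\]
Then I would set $b=0$ in \eqref{*-b-enriched nonexpansive mapping}, which collapses the left-hand side $\|b(x-y)+(u_x-u_y)\|$ to $\|u_x-u_y\|$ and the right-hand side $(b+1)\|x-y\|$ to $\|x-y\|$. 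The two conditions therefore coincide, giving both implications at once: a *-$0$-enriched nonexpansive mapping is *-nonexpansive, and any *-nonexpansive mapping satisfies the *-$b$-enriched nonexpansive condition with $b=0$.

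There is essentially no obstacle here; the only thing to be mindful of is the quantifier structure (the $u_x$ ranges over best-approximation points in $Tx$ and the $u_y$ depending on it must be chosen from best-approximation points in $Ty$), so I would make sure to copy the quantifiers across in the same order when identifying the two inequalities, rather than rewriting them. This keeps the verification transparent and avoids any apparent mismatch in how the selectors are produced.
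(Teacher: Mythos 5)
Your verification is correct and is exactly the intended (definitional) argument: substituting $b=0$ into \eqref{*-b-enriched nonexpansive mapping} reduces it verbatim to the *-nonexpansive condition of Husain and Latif, quantifiers included. The paper states this observation without proof, and your direct substitution, with the care you note about preserving the order of the selector quantifiers, is all that is needed.
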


In the sequel, the following Lemma plays a crucial role in the results obtained  for *-$b$-enriched nonexpansive mappings:

\begin{lemma}\label{lemma projector operators}
Let $C$  a convex subset of a metric space $X$. Let $T : C \longrightarrow CC(C)$ a multivalued mapping and for any $\mu \in(0,1)$ consider the averaged multivalued mapping $T_\mu : C \longrightarrow CC(C)$ such as  $T_\mu=(1-\mu)I+\mu T$ and $Fix(T)\neq \emptyset$. Then $Fix(P_T)=Fix(P_{T_\mu})$
\end{lemma}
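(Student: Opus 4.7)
The plan is to reduce the statement to Lemma \ref{fpTTl} by first showing that, for any multivalued mapping whose values are closed sets, the fixed-point sets of the mapping and of its best-approximation operator coincide. That is, I would first prove the auxiliary identity $Fix(P_T)=Fix(T)$ (and similarly for $T_\mu$), and then chain these equalities through Lemma \ref{fpTTl}.

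For the auxiliary identity, let $x\in Fix(P_T)$, so that $x\in P_Tx$. By the very definition of $P_T$ this gives $x\in Tx$ together with $d(x,x)=d(x,Tx)$, hence in particular $x\in Tx$, i.e.\ $x\in Fix(T)$. Conversely, if $x\in Fix(T)$ then $x\in Tx$, so $d(x,Tx)\le d(x,x)=0$, and since $Tx$ is closed (the values of $T$ lie in $CC(C)$) we conclude that $x\in Tx$ realizes the distance, so $x\in P_Tx$ and therefore $x\in Fix(P_T)$. The same argument applied to $T_\mu$ (whose values $T_\mu x=(1-\mu)x+\mu Tx$ are translates of the closed convex sets $\mu Tx$, hence closed) yields $Fix(P_{T_\mu})=Fix(T_\mu)$.

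At this point the proof is essentially finished: Lemma \ref{fpTTl} tells us that $Fix(T_\mu)=Fix(T)$, so combining the three equalities gives
\[
Fix(P_T)=Fix(T)=Fix(T_\mu)=Fix(P_{T_\mu}),
\]
which is the desired conclusion. I do not foresee a genuine obstacle here; the only subtle point worth flagging is the use of closedness of $Tx$ and $T_\mu x$ to pass from $d(x,Tx)=0$ to $x\in Tx$, which is why the hypothesis $T:C\to CC(C)$ is essential. Note also that the assumption $Fix(T)\neq\emptyset$ is not actually used in the argument — both inclusions of the identity $Fix(P_T)=Fix(T)$ go through even if the fixed-point set happens to be empty (in which case the statement becomes a trivial equality $\emptyset=\emptyset$).
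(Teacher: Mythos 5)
Your proof is correct, but it is organized differently from the paper's. The paper argues directly with distances: for $s\in Fix(P_T)$ it computes $d(s,T_\mu s)=\mu\, d(s,Ts)=0$ (using that $T_\mu s$ is the translate of $\mu Ts$ by $(1-\mu)s$, so distances scale by $\mu$), and symmetrically in the other direction, concluding membership in $P_{T_\mu}s$ from the vanishing distance. You instead factor the argument into two reusable pieces: the identity $Fix(P_S)=Fix(S)$ for any closed-valued $S$, applied to both $T$ and $T_\mu$, plus the translation lemma $Fix(T_\mu)=Fix(T)$ (Lemma \ref{fpTTl}), and then chain the equalities. The underlying geometric fact (that $T_\mu x$ is an affine image of $Tx$) is the same in both arguments, but your decomposition is cleaner and makes explicit an identity the paper re-derives later anyway (as \eqref{fixed points1} and \eqref{fixed points3} in the proof of Theorem \ref{closed and convex fixed point}); it also avoids the paper's somewhat informal chain of equalities mixing points and sets inside $d(\cdot,\cdot)$. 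Two small remarks: in your converse direction the closedness of $Tx$ is not actually needed, since $x\in Tx$ already witnesses $d(x,Tx)=0=d(x,x)$; and your observation that the hypothesis $Fix(T)\neq\emptyset$ is never used is accurate. Note finally that both your argument and the paper's implicitly require $X$ to be a linear (normed) space rather than a bare metric space, since otherwise $T_\mu=(1-\mu)I+\mu T$ and the translation argument of Lemma \ref{fpTTl} are not defined; this is a defect of the lemma's statement, not of your proof.
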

\begin{proof}
Let $s\in Fix(P_T)$ (i.e. $s \in P_T(s)$). According to the definition of the best approximation operator we have:  $d(s,T_\mu s)=d(s, (1-\mu)s+\mu Ts)=d(0,(1-\mu)s+\mu Ts-s)=d(0,\mu Ts-\mu s)=d(\mu s,\mu Ts)=\mu d(s,Ts)=d(s,s)=0$ thus $s\in Fix(P_{T_\mu})$ \par
Let $s\in Fix(P_{T_\mu})$. The following equalities hold:  $d(s,Ts)=\mu d(s,Ts)=d(\mu s,\mu Ts)=d(0,\mu Ts-\mu s)=d(0,(1-\mu)s+\mu Ts-s)=d(s, (1-\mu)s+\mu Ts)=d(s,T_\mu s)=d(s,s)=0$ thus $s\in Fix(P_T)$ \par
Hence we get the proving $Fix(P_T)=Fix(P_{T_\mu})$
\end{proof}

 The following Theorem extends the Theorem 3.4 provided by Abdul Rahim Khan in \cite{Abdul Rahim Khan} based on Browder's Theorem \cite{Browder FE}:

\begin{theorem}\label{closed and convex fixed point}
Let $X$ be a strictly convex Banach space and C a nonempty weakly compact  convex subset of $X$. Let $T : C \longrightarrow CC(C)$ be a *-$b$-enriched nonexpansive mapping such that $F(T)$ is nonempty. Then $F(T)$ is closed and convex.
\end{theorem}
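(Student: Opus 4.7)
The plan is to reduce the statement to the classical fact that the fixed-point set of a single-valued nonexpansive self-map of a strictly convex Banach space is closed and convex. The first step is to show that the best-approximation operator $P_T$ is in fact single-valued on $C$. For each $x \in C$ the set $Tx \in CC(C)$ is nonempty, closed, and convex; since $C$ is weakly compact and $Tx$ is closed convex in $C$, it is weakly closed and hence weakly compact. Weak lower semicontinuity of $z \mapsto \|x - z\|$ then guarantees that a best approximation of $x$ in $Tx$ exists, and strict convexity of $X$ forces its uniqueness. Thus $P_T x$ is a singleton for every $x \in C$. Moreover, $x \in F(T)$ is equivalent to $d(x,Tx)=0$, which is equivalent to $P_T x = x$, so $F(T)=F(P_T)$.

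With $P_T$ single-valued, the defining condition of a $*$-$b$-enriched nonexpansive mapping forces the existentially quantified $u_y$ to coincide with $P_T y$, giving the pointwise inequality
\[
\|b(x - y) + (P_T x - P_T y)\| \leq (b+1)\|x - y\|, \qquad \forall x, y \in C.
\]
Setting $\mu = 1/(b+1)$ and $S := (1-\mu)I + \mu P_T$, dividing the above by $b+1$ yields $\|Sx - Sy\| \leq \|x - y\|$, so $S : C \to C$ is a (single-valued) nonexpansive mapping. A direct check, analogous to Lemma \ref{fpTTl} in the single-valued setting, gives $F(S) = F(P_T) = F(T)$.

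It remains to invoke the classical fact that the fixed-point set of a nonexpansive self-map of a strictly convex Banach space is closed and convex: closedness is immediate from continuity of $S$, and for convexity one takes $p, q \in F(S)$ and $z = \lambda p + (1-\lambda) q$ and uses the bounds $\|Sz - p\| \leq (1-\lambda)\|p-q\|$ and $\|Sz - q\| \leq \lambda\|p-q\|$ together with $\|p-q\| \leq \|Sz - p\| + \|Sz - q\|$ to force equality in the triangle inequality, at which point strict convexity of $X$ forces $Sz = z$. The main obstacle is the first step: collapsing the existential quantifier in the $*$-$b$-enriched definition down to a pointwise inequality on a genuine single-valued operator $P_T$. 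This is precisely where both strict convexity of $X$ (for uniqueness of best approximations) and weak compactness of $C$ (for their existence inside each $Tx$) are essential.
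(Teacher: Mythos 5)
Your proof is correct and follows essentially the same route as the paper: both arguments use weak compactness and strict convexity to make the best-approximation operator single-valued, collapse the $*$-$b$-enriched inequality into nonexpansiveness of the averaged selector $(1-\mu)I+\mu P_T$ (which is exactly the paper's $P_{T_\mu}$), and identify its fixed-point set with $F(T)$. The only cosmetic differences are that you prove the final closedness/convexity of the fixed-point set of a nonexpansive self-map directly from strict convexity rather than citing Browder's theorem, and your argument handles $b=0$ uniformly instead of as a separate case.
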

\begin{proof}
The proof will be divided in two cases:\\
Case 1 $b \in (0,\infty)$.\\
Let $x,y \in C$ denoting $\mu=\frac{1}{b+1}$ then $b=\frac{1}{\mu}-1$ with $b \in (0,\infty)$ then $\mu \in (0,1]$.
Then by definition of *-$b$ nonexpansive mapping of mapping $T$  for each $u_x \in P_T x$ there exists $u_y \in P_T y$ such as $\|(b(x-y)+(u_x-u_y)\| \leq (b+1) \|x-y\|$. Putting $b=\frac{1}{\mu}-1$ into the inequality \eqref{*-b-enriched nonexpansive mapping}, it become: 
\begin{equation} \label{enriched1}
\|(1-\mu)(x-y)+\mu(u_x-u_y)\| \leq \|x-y\|
\end{equation} \par
We will denote $T_\mu:C \longrightarrow CC(C)$ 

\begin{equation} \label{averaged operator}
T_\mu=(1-\mu)I+\mu T
\end{equation}
as the averaged mapping.\\
  Let consider $u_{\mu x},u_{\mu y}\in C $ as it follows: $u_{\mu x}=(1-\mu)x+\mu u_x, u_{\mu y}=(1-\mu)y+\mu u_y$. Putting $u_{\mu x},u_{\mu y}\in C $ into the inequality \eqref{enriched1}, it will rewrite as it follows:
  \begin{equation}\label{nonexpansive}
   \|u_{\mu y} - u_{\mu x}\| \leq \| y-x\| 
  \end{equation}
  \par
For each $x\in C$, the set  $Tx$ is weakly compact and convex thus it is Cebysev. On the other hand since the averaged mapping's image $T_\mu x$ is a translation of the set $ \mu Tx$ by vector $(1-\mu) x$ we can easily conclude that it is weakly compact and convex and thus it is Cebysev (i.e. $P_{T_\mu} x $ have a unique element denoted $u_{\mu x}$). Thus $P_{T_\mu}$ is a single valued mapping. \par
Using definition of *- $b$ nonexpansive mappings there exists $u_{\mu_y}=P_{T_\mu}y \in T_y$ such that
\begin{equation}\label{nonexpansive projector}
\| P_{T_\mu} y-P_{T_\mu} x\|=\|u_{\mu y}-u_{\mu x}\| \leq \|y-x\|
\end{equation}
Inequality \eqref{nonexpansive projector} implies that $P_{T_\mu}$ is a nonexpansive selector of averaged mapping $T_\mu$ \eqref{averaged operator}.
By definition of $P_{T_\mu}$ for each $x \in C$ the operator $P_{T_\mu}: C \longrightarrow C$ 
\begin{equation}\label{projector distance}
d(x,P_{T_\mu} x)=d(x. u_{\mu_x})=d(x,T_\mu x)
\end{equation}
From \eqref{projector distance} results 
\begin{equation}\label{fixed points1}
F(T_\mu)=F(P_{T_\mu})
\end{equation}.\par 
From lemma\eqref{lemma projector operators} results
\begin{equation}\label{fixed points2}
F(P_{T_\mu})=F(P_T)
\end{equation} .\\
$Tx$ is a Cebysev set thus $P_T x$ is unique. The averaged mapping $P_T: C \longrightarrow C$ is a selector for $T$.
\begin{equation}\label{projector distance1}
d(x,P_T x)=d(x. u_x)=d(x,Tx)
\end{equation}
From \eqref{projector distance1} result
\begin{equation}\label{fixed points3}
F(T)=F(P_T)
\end{equation}.\par 
From \eqref{fixed points1}, \eqref{fixed points2} and \eqref{fixed points3} result 
\begin{equation}\label{fixed points4}
F(P_{T_\mu})=F(T)
\end{equation}
According Browder's Theorem \cite{Browder FE} and together with the fact that the  $P_{T_\mu}$ is a nonexpansive mapping yield $F(P_{T_\mu})$ is closed and convex set. Moreover according \eqref{fixed points4}  $F(T)$ is closed and convex, too.\\
Case 2 $b=0$ then $\mu =1$ and $T_\mu =T$. By combining \eqref{fixed points3} and the Theorem \cite{Browder FE} implies $F(T)$ is closed and convex.
\end{proof}

\begin{remark}
Using Remarks \ref{Banach Hilbert} the Theorem \ref{closed and convex fixed point} can be reformulated for Hilbert space.
\end{remark}

\begin{theorem}\label{existence theorem}
Let $H$ be a Hilbert space and C a nonempty closed convex subset of $H$. Let $T : C \longrightarrow CC(C)$ be a *-$b$-enriched nonexpansive mapping. Then $F(T)$ is nonempty. 
\end{theorem}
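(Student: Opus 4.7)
The plan mirrors the reduction already carried out in the proof of Theorem \ref{closed and convex fixed point}. Set $\mu = \frac{1}{b+1}$ and introduce the averaged multivalued mapping $T_\mu : C \longrightarrow CC(C)$ given by $T_\mu x = (1-\mu)x + \mu Tx$. Because each $Tx$ is closed and convex, so is the translated scaled set $T_\mu x$; by Lemma \ref{lemma closed convex set Chebyshev} every closed convex subset of the Hilbert space $H$ is Chebyshev, so the best-approximation selector $P_{T_\mu} : C \longrightarrow C$ is single-valued.

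Next, using $b = \tfrac{1}{\mu} - 1$ in the defining inequality \eqref{*-b-enriched nonexpansive mapping}, for any $x, y \in C$ and the corresponding $u_x \in P_T x$, $u_y \in P_T y$ I rewrite the inequality as $\|(1-\mu)(x-y) + \mu(u_x - u_y)\| \leq \|x-y\|$. A direct computation shows that $u_{\mu x} := (1-\mu)x + \mu u_x$ lies in $T_\mu x$ and realises $d(x, T_\mu x) = \mu\, d(x, Tx)$; by uniqueness of the best approximation, $u_{\mu x} = P_{T_\mu} x$, and analogously $u_{\mu y} = P_{T_\mu} y$. The rewritten inequality then becomes $\|P_{T_\mu} x - P_{T_\mu} y\| \leq \|x-y\|$, so $P_{T_\mu}$ is a single-valued nonexpansive self-map of $C$.

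At this point I would invoke Browder's fixed point theorem \cite{Browder FE} to produce $s \in C$ with $s = P_{T_\mu} s$. The chain of identifications $F(T) = F(P_T) = F(P_{T_\mu})$ — the first because $s \in P_T s$ is equivalent to $d(s, Ts) = 0$ and hence, by closedness of $Ts$, to $s \in Ts$, and the second supplied by Lemma \ref{lemma projector operators} — then yields $s \in F(T)$, so $F(T) \neq \emptyset$. The main obstacle is the applicability of Browder's theorem here: as stated, $C$ is only closed and convex, whereas a nonexpansive self-map of an unbounded closed convex subset of a Hilbert space can fail to have a fixed point (translations on $H$ being the standard counterexample). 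The statement therefore seems to tacitly require $C$ to be bounded as well (matching the hypothesis of Theorem \ref{Abbasexistence}), under which the Browder--G\"ohde--Kirk theorem applies to $P_{T_\mu}$ and the argument closes.
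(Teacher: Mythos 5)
Your reduction is exactly the one the paper uses: pass to the averaged mapping $T_\mu=(1-\mu)I+\mu T$, observe that each $T_\mu x$ is a Chebyshev set so that $P_{T_\mu}$ is a single-valued nonexpansive self-map of $C$ realising $d(x,T_\mu x)=\mu\,d(x,Tx)$, produce a fixed point of $P_{T_\mu}$, and transfer it back to $T$. The only point of divergence is the existence step for $P_{T_\mu}$: you invoke the Browder--G\"ohde--Kirk theorem as a black box, whereas the paper re-derives it in situ by introducing the approximating contractions $P_{T_{\mu c}}x=c_nP_{T_\mu}x+(1-c_n)x_0$, taking their unique Banach fixed points $u_{c_n}$, extracting a weakly convergent subsequence, and showing its weak limit is fixed by $P_{T_\mu}$ via the standard demiclosedness computation; mathematically this is the same argument, only inlined. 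Your reservation about boundedness is well founded and is not a defect of your write-up relative to the paper's: the paper's own proof explicitly appeals to ``the closed, convex and bounded set $C$'' being weakly compact, so boundedness is tacitly assumed there as well, and without it the statement fails (on $C=H$ take $Tx=\{x+v\}$ for a fixed $v\neq 0$; this is a $*$-$b$-enriched nonexpansive mapping with values in $CC(C)$ and empty fixed point set). One small caution: Lemma \ref{lemma projector operators} is stated under the hypothesis $Fix(T)\neq\emptyset$, so citing it to close an existence proof is circular in form; better to argue directly, as you in fact sketch, that $s=P_{T_\mu}s$ gives $d(s,T_\mu s)=0$, hence $s\in T_\mu s$ by closedness, hence $s\in Ts$.
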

\begin{proof}
Let $\{ c_n\}$ a sequence of positive numbers convergent to 1 as $n \longrightarrow \infty$, $n \geq 1$ such as $0 <c_n<1$ (as example: $c_n=\frac{n-1}{n}$).\\
Let $x_0 \in C$ a given point. Let consider the averaged operator $T_\mu :C \rightarrow CC(C)$
\begin{equation} \label{averaged operator3}
T_\mu=(1-\mu)I+\mu T
\end{equation} $\mu \in(0,1)$.
Each Hilbert space is uniformly convex Banach space then for each $x\in C$, the image set $Tx$ is a weakly compact and convex thus $Tx$ and $T_\mu x$ are Cebyshev.\\
For each $x \in C$,  based on the averaged mapping $T_\mu$ and the sequence $c_n$ for each $x \in C$ we will denote the mapping $P_{T_{\mu c}}: C \longrightarrow C$ as it follows:
\begin{equation} \label{ptmuc}
P_{T_{\mu c}} x=c_n P_{T_\mu}x+(1-c_n)x_0
\end{equation}
$T_\mu$ is a *-nonexpansive mapping, then for each $x \in C$ there exists $y \in C$ such as, if we suppose $P_{T_\mu} y=u_y$ and $P_{T_\mu} y=u_y$, holds $\|u_y-u_x\| \leq \|y-x\|$. \\

Combining  definitions of mappings $P_{T_{\mu c}}$ and $T_\mu$ we obtain:\\
$\|P_{T_{\mu c}} y-P_{T_{\mu c}} x\|=\|c_n u_y+(1-c_n)x_0-c_n u_x-(1-c_n)x_0\|=c_n\|u_y-u_x\| \leq c_n\|y-x\|$ \\
This is equivalent with:
\begin{equation}\label{contractiveptmuc}
\|P_{T_{\mu c}} y-P_{T_{\mu c}} x\| \leq c_n\|y-x\|
\end{equation}
The inequality \eqref{contractiveptmuc} shows that the mapping $P_{T_{\mu c}}$ is a contractive mapping, hence $P_{T_{\mu c}}$ admit an unique fixed point denoted $u_c$.\\

The closed, convex and bounded set $C$ in Hilbert space $H$ is weakly compact. There exists a sequence $\{c_{n_j}\}$ in $(0,1)$ such that the followings statements holds: $c_{n_j} \rightarrow 1$ (as $j \rightarrow \infty$), $c_{n_j}=c_n$ and the sequence $u_{c_{n_j}}=u_{c_n}$ converges weakly to an element $p \in H$. The set $C$ is weakly compact therefore $p \in C$.\\
In the following we will proof that $p \in F(P_{T_\mu})$.

We have:\\
$\|u_{c_{n_j}}-P_{T_\mu} p\|^2=\|(u_{c_{n_j}}-p)+(p-P_{T_\mu} p)\|^2=\| u_{c_{n_j}}-p\|^2+\|p-P_{T_\mu} p\|^2+2\langle u_{c_{n_j}}-p,p-P_{T_\mu} p\rangle$\\
Because $u_{c_{n_j}}$ converges weakly to $p$ thus $u_{c_{n_j}}-p$ converges weakly to zero. This implies:\\ $\langle u_{c_{n_j}}-p,p-P_{T_\mu}\rangle \longrightarrow 0$ as $j \longrightarrow \infty$.\\
The above statements implies:
\begin{equation}\label{inequation fp}
 \lim_{j\to\infty} (\|u_{c_{n_j}}-P_{T_\mu} p\|^2-\| u_{c_{n_j}}-p\|^2)= \|p-P_{T_\mu} p\|^2
\end{equation}
On the other hand we have:\\
$\|P_{T_\mu} u_{c_{n_j}}-u_{c_{n_j}}\|=\|c_n P_{T_\mu} u_{c_{n_j}}+(1-c_n)x_0-P_{T_\mu} u_{c_{n_j}}\|=|1-c_n| \|P_{T_\mu} u_{c_{n_j}}-x_0\|$ implies\\
 $\|P_{T_\mu} u_{c_{n_j}}-u_{c_{n_j}}\| \rightarrow 0$ as $j \rightarrow \infty$
  (i.e. $\lim_{j\to\infty}(P_{T_\mu} u_{c_{n_j}}-u_{c_{n_j}})=0$)\\

$P_{T_\mu}$ is nonexpansive (i.e. $\|P_{T_\mu}  u_{c_{n_j}}-P_{T_\mu} p\| \leq\|u_{c_{n_j}}-p\|$).\\
Therefore:
\[\| u_{c_{n_j}}-P_{T_\mu}p\| \leq \|u_{c_{n_j}}+P_{T_\mu}u_{c_{n_j}}\|-\|P_{T_\mu}u_{c_{n_j}}-P_{T_\mu}p\| \leq \|u_{c_{n_j}}-P_{T_\mu} u_{c_{n_j}}\| + \| u_{c_{n_j}}-p\|\]\\
Thus
\[ \limsup (\| u_{c_{n_j}}-P_{T_\mu}p\|-\| u_{c_{n_j}}-p\|) \leq \lim_{j\to\infty}(P_{T_\mu} u_{c_{n_j}}-u_{c_{n_j}})=0\]
Since C is weakly compact we have:
 \[ \limsup (\| u_{c_{n_j}}-P_{T_\mu}p\|^2-\| u_{c_{n_j}}-p\|^2)= \limsup (\| u_{c_{n_j}}-P_{T_\mu}p\|-\| u_{c_{n_j}}-p\|)(\| u_{c_{n_j}}+P_{T_\mu}p\|-\| u_{c_{n_j}}-p\|) \leq 0 \]
hence
 \begin{equation}\label{inequation fp1}
 \lim_{j\to\infty} (\|u_{c_{n_j}}-P_{T_\mu} p\|^2-\| u_{c_{n_j}}-p\|^2)= 0
\end{equation}  
 From \eqref{inequation fp} and \eqref{inequation fp1} we obtain
 \[\|p-P_{T_\mu} p\|^2=0\].
 That implies $p$ is a fixed point of $P_{T_\mu}$
 
 From \eqref{fixed points4}  we have: 
\begin{equation}\label{fixed points5}
F(T_\mu)=F(P_{T_\mu})
\end{equation}.\par

 According \eqref{fixed points5} $p$ is a fixed point of $T_\mu$. Moreover according the Lemma \ref{lemma projector operators}, $p$ is a fixed point of $T$.

\end{proof}

\begin{remark}
The Theorem \ref{existence theorem} don't ensure the uniqueness of fixed point of multivalued *-$b$-enriched nonexpansive mapping $T$
\end{remark}

The followings theorems provide methods based on Krasnoselskii iteration, to approximate the fixed points of the *-$b$-enriched nonexpansive mappings.

\begin{theorem}\label{strong convergence}
Let $H$ be a Hilbert space and $C$ a nonempty closed convex subset of $H$. Let $T : C \longrightarrow CC(C)$ be a *-$b$-enriched nonexpansive  and hemicompact mapping. There exist $\theta \in (0,1)$ such as for any $x_0 \in C$, the Krasnoselskii iteration $\{x_n\}_{n=0}^\infty$
\begin{equation}\label{Krasnoselsii00}
x_{n+1}=(1-\theta) x_n+\theta y_n, n\geq 0, y_n \in P_T x_n
\end{equation} 
converges (strongly) to a fixed point of $T$.
\end{theorem}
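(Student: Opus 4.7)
The plan is to reduce the theorem to the classical Krasnoselskii--Mann theorem for a single-valued nonexpansive map, via the same averaging trick that powered Theorems \ref{closed and convex fixed point} and \ref{existence theorem}. Set $\mu=1/(b+1)\in(0,1]$ and $T_\mu=(1-\mu)I+\mu T$. For every $x\in C$, the set $Tx\in CC(C)$ is closed convex in a Hilbert space and hence Chebyshev (Lemma \ref{lemma closed convex set Chebyshev}), so $P_T x$ is a singleton; likewise $T_\mu x$ is Chebyshev, and a short calculation based on $\|(1-\mu)x+\mu u-x\|=\mu\|u-x\|$ (applied to the bijection $u\mapsto(1-\mu)x+\mu u$ from $Tx$ onto $T_\mu x$) gives the key identity
\[
P_{T_\mu}x=(1-\mu)x+\mu P_T x.
\]
By the argument already used in the proof of Theorem \ref{closed and convex fixed point}, the *-$b$-enriched condition rewrites as $\|P_{T_\mu}x-P_{T_\mu}y\|\leq\|x-y\|$, so $P_{T_\mu}$ is a single-valued \emph{nonexpansive} selector of $T_\mu$.

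Next, I would fix any $\lambda\in(0,1)$ and set $\theta=\lambda\mu\in(0,1)$. Since $y_n\in P_T x_n$ is the unique best approximant of $x_n$ in $Tx_n$, the identity above recasts the iteration as
\[
x_{n+1}=(1-\theta)x_n+\theta y_n=(1-\lambda)x_n+\lambda P_{T_\mu}x_n,
\]
that is, the classical Krasnoselskii--Mann scheme for the nonexpansive $P_{T_\mu}:C\to C$. Theorem \ref{existence theorem} ensures $F(T)\neq\emptyset$, and Lemma \ref{lemma projector operators} together with \eqref{fixed points4} yields $F(T)=F(T_\mu)=F(P_{T_\mu})$; pick any $p$ in this common fixed point set. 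Applying Lemma \ref{lambdapatrate} to $x_{n+1}-p=(1-\lambda)(x_n-p)+\lambda(P_{T_\mu}x_n-p)$ and using nonexpansiveness of $P_{T_\mu}$ gives
\[
\|x_{n+1}-p\|^2\leq\|x_n-p\|^2-\lambda(1-\lambda)\|x_n-P_{T_\mu}x_n\|^2,
\]
so $\{\|x_n-p\|\}$ is nonincreasing (hence bounded and convergent) and $\|x_n-P_{T_\mu}x_n\|\to 0$.

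From the identity $\|x-P_{T_\mu}x\|=\mu\|x-P_T x\|=\mu\,d(x,Tx)$ I then conclude $d(x_n,Tx_n)\to 0$, so hemicompactness of $T$ extracts a subsequence $x_{n_k}\to q\in C$. Continuity of the nonexpansive $P_{T_\mu}$ combined with $\|x_{n_k}-P_{T_\mu}x_{n_k}\|\to 0$ forces $P_{T_\mu}q=q$, hence $q\in F(P_{T_\mu})=F(T)$. Taking $p=q$ in the Fej\'er inequality above, $\{\|x_n-q\|\}$ converges, and since it has a subsequence tending to $0$ it converges to $0$, giving strong convergence $x_n\to q\in F(T)$.

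The only conceptually delicate point is the initial reduction: one must verify cleanly that the multivalued *-$b$-enriched condition is preserved by the projector construction and collapses to an honest single-valued nonexpansive map, so that the multivalued iteration is really a classical one in disguise. Once this reduction is installed, everything else is standard Krasnoselskii--Mann theory — Fej\'er monotonicity via Lemma \ref{lambdapatrate}, asymptotic regularity, and the hemicompactness-to-subsequence extraction — and, notably, no continuity hypothesis on $T$ itself is required, because $P_{T_\mu}$ supplies all the continuity needed to identify the subsequential limit as a fixed point.
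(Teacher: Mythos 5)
Your proposal is correct and follows essentially the same route as the paper's proof: reduce to the single-valued nonexpansive selector $P_{T_\mu}=(1-\mu)I+\mu P_T$, establish Fej\'er monotonicity together with $\|x_n-P_{T_\mu}x_n\|\to 0$, and then use hemicompactness plus continuity of $P_{T_\mu}$ to extract a strongly convergent subsequence and upgrade to convergence of the whole sequence. In two places you are actually tidier than the paper: you get $\|x_{n+1}-p\|^2\le\|x_n-p\|^2-\lambda(1-\lambda)\|x_n-P_{T_\mu}x_n\|^2$ directly from Lemma \ref{lambdapatrate} rather than via the paper's auxiliary parameter $a$, and you explicitly invoke Theorem \ref{existence theorem} to guarantee that $F(P_{T_\mu})$ is nonempty before choosing $p$, a step the paper leaves implicit.
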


\begin{proof}
Using same arguments from the proofing of the Theorem \ref{existence theorem}  yield $Tx$ is a Cebyshev set (i.e. $y_n=P_T x_n$ in \eqref{Krasnoselsii00}).
Let $\lambda \in(0,1)$ such as $\theta = \frac{\lambda}{b+1}$. The iteration process \eqref{Krasnoselsii00} is equivalent with

\begin{equation}\label{Krasnoselsii001}
x_{n+1}=(1-\frac{\lambda}{b+1}) x_n+\frac{\lambda}{b+1} P_Tx_n, n\geq0
\end{equation} 
We can rewrite as:

\begin{equation}\label{Krasnoselsii002}
x_{n+1}=(1-\lambda) x_n+\lambda(\frac{b}{b+1}I+ \frac{1}{b+1}P_T)x_n, n\geq0
\end{equation} 

Let $x,y \in C$ and $b \in (0,\infty)$. We will  denote $\mu=\frac{1}{b+1}$ yields $b=\frac{1}{\mu}-1$ as $\mu \in (0,1]$.
Hence by definition of *-$b$ nonexpansive mapping of mapping $T$,  for each $u_x \in P_T x$ there exists $u_y \in P_T y$ such as $\|(b(x-y)+(u_x-u_y)\| \leq (b+1) \|x-y\|$. Putting in \eqref{*-b-enriched nonexpansive mapping}  $b=\frac{1}{\mu}-1$ we obtain: 
\begin{equation} \label{enriched003}
\|(1-\mu)(x-y)+\mu(u_x-u_y)\| \leq \|x-y\|
\end{equation} \par
Denoting the averaged mapping:
\begin{equation} \label{averaged operator002}
P_{T_\mu}=(1-\mu)I+\mu P_T
\end{equation}
  and let $u_{\mu x}=(1-\mu)x+\mu u_x$ and $u_{\mu y}=(1-\mu)y+\mu u_y$ thus the inequality \eqref{enriched1} is equivalent with
  \begin{equation}\label{nonexpansive001}
   \|u_{\mu y} - u_{\mu x}\| \leq \| y-x\| 
  \end{equation}
  
Recall that for each $x \in C$ the images $Tx$ and $T_\mu x$ are Cebyshev sets thus the mappings $P_T$ and $P_{T_\mu}$ are single valued mappings. \\
Substituting $\mu=\frac{1}{b+1}$ and $1-\mu=\frac{b}{b+1}$ in \eqref{Krasnoselsii002} the iteration process is equivalent with:
\begin{equation}\label{Krasnoselskii003}
x_{n+1}=(1-\lambda) x_n+\lambda((1-\mu )I+ \mu P_T)x_n, n\geq0
\end{equation}
Substituting \eqref{averaged operator002} in \eqref{Krasnoselskii003} we obtain:
\begin{equation}\label{Krasnoselskii004}
x_{n+1}=(1-\lambda) x_n+\lambda P_{T_\mu}x_n, n\geq0
\end{equation}
From \eqref{averaged operator002} and \eqref{nonexpansive001} yields $P_{T_\mu}$ is nonexpansive mapping.\\ 
Now we will prove $\{x_n-P_{T_\mu}x_n\}$ converges strongly to zero.\\
Let $p\in F(P_{T_\mu})$ (i.e. $P_{T_\mu} p=p$).\\
\begin{equation}\label{ineg1}
\begin{split}
\|x_{n+1}-p\|^2 {}&=\|(1-\lambda)x_n+\lambda P_{T_\mu}x_n-p\|^2=  \|(1-\lambda)(x_n-p)+\lambda P_{T_\mu}(x_n-p)\|^2  \\{}& = (1-\lambda)^2\|x_n-p\|^2+\lambda^2\|P_{T_\mu}x_n-p\|^2+\lambda(1-\lambda)\langle P_{T_\mu}x_n-p,x_n-p \rangle \\{}& \leq \|(1-\lambda)(x_n-p)+\lambda P_{T_\mu}(x_n-p)\|^2  \\{}& =  (1-\lambda)^2\|x_n-p\|^2+\lambda^2\|x_n-p\|^2+\lambda(1-\lambda)\langle P_{T_\mu}x_n-p,x_n-p \rangle \\{}& \leq \|(1-\lambda)(x_n-p)+\lambda P_{T_\mu}(x_n-p)\|^2=  ((1-\lambda)^2+\lambda^2+\lambda(1-\lambda))\| x_n-p \|^2
\end{split} 
\end{equation}

Let $a^2 \leq \lambda(1-\lambda)$ thus

\begin{equation}\label{ineg2}
\begin{split}
a^2\|x_n-P_{T_\mu}x_n\|^2=a^2\|x_n-p\|^2+a^2\|P_{T_\mu}-p\|-2a^2\langle P_{T_\mu}x_n-p,x_n-p \rangle 
\end{split}
\end{equation}
From \eqref{ineg1} $+$ \eqref{ineg2} together with Cauchy-Schwarz inequality
\[\langle P_{T_\mu}x_n-p,x_n-p \rangle \leq |P_{T_\mu}x_n-p\|\|x_n-p\| \leq \|x_n-p\|^2\] and and $P_{T_\mu}$ nonexpansivness we obtain:
\begin{equation}\label{ineg3}
\begin{split}
\|x_{n+1}-p\|^2+a^2\|x_n-P_{T_\mu}x_n\|^2  {}& \leq (2a^2+(1-\lambda)^2+\lambda^2)\|x_n-p\|^2+2(\lambda(1-\lambda)-a^2)\langle P_{T_\mu}x_n-p,x_n-p \rangle \\{}& \leq  (2a^2+(1-\lambda)^2+\lambda^2+2\lambda(1-\lambda)-2a^2 )\|x_n-p\|^2=\|x_n-p\|^2
\end{split}
\end{equation}

Now putting in \eqref{ineg3} $a^2=\lambda(1-\lambda)$ yields:
\begin{equation}\label{ineg4}
\lambda(1-\lambda)\|x_n-P_{T_\mu}x_n\|^2 \leq \|x_n-p\|^2-\|x_{n+1}-p\|^2
\end{equation}
Summing up in \eqref{ineg4} from $n=0$ to $n=N$ we get:
\[ \lambda(1-\lambda) \sum_{n=0}^{N}\|x_n-P_{T_\mu}x_n\|^2 \leq \sum_{n=0}^{N}(\|x_n-p\|^2-\|x_{n+1}-p\|^2)=\|x_0-p\|^2-\|x_{N+1}-p\|^2 \leq \|x_0-p\|^2\]
Hence, from $\sum_{n=0}^{\infty}\|x_n-P_{T_\mu}x_n\|^2 < \infty$ follows immediately 
\begin{equation}\label{convergence}
\|x_n-P_{T_\mu}x_n\| \rightarrow 0  
\end{equation}
as n $\rightarrow \infty$ \\
If we denote $P_{T_\lambda}=(1-\lambda)I+\lambda P_{T_\mu}$ and $\theta=\lambda \mu $ we have:
\begin{equation}\label{hemicompact}
P_{T_\lambda} x-x=\lambda(P_{T_\mu} x-x)=\lambda \mu (P_Tx-x)=\theta(P_Tx-x)
\end{equation}
Since $T$ is hemicompact, from \eqref{hemicompact} follows that $P_{T}$, $P_{T_\lambda}$  and $P_{T_\mu}$  are hemicompact too.\\
$P_{T_\mu}$ is hemicompact (i.e. there exists $\{x_{ni}\}$ a strong convergent subsequence of $\{{x_n}\}$) or else (i.e. there exists $p \in C$ such that $x_{ni} \rightarrow p$)\\
Since $P_{T_\mu}$ is nonexpansive mapping hence it is continuous. From  continuity of $P_{T_\mu}$ follows that:
\begin{equation}\label{ptmucontinuity}
P_{T_\mu}x_{ni} \rightarrow P_{T_\mu}p, i \rightarrow \infty
\end{equation}
From \eqref{convergence} follows $\{x_{ni}-P_{T_\mu}x_{ni}\}$ converges strongly to $0$ and from \eqref{ptmucontinuity} follows $\{x_{ni}-P_{T_\mu}x_{ni}\}$ converges strongly to $\{p-P_{T_\mu}p\}$ proves that $p=P_{T_\mu}p$ \\
The nonexpansivness of $P_{T_\mu}$ can be expressed as follows: 
\[\|x_{n+1}-p\| \leq \|x_n-p\|, n \geq 0\]
follows the entire sequence $\{x_n\}_{n=0}^\infty$ converges strongly to $p$

According \eqref{fixed points4} from the proofing of Theorem \ref{closed and convex fixed point} $p$ is fixed point of $T$
\end{proof}

\begin{remark}\label{Picard}
The Krasnoselskii type iteration  defined in \eqref{Krasnoselsii00} for mapping $T$ is a Picard iteration for averaged mapping 
\begin{equation}\label{mapU}
U_\theta=(1-\theta)I+\theta T.
\end{equation}

\end{remark}

\begin{theorem}\label{weak convergence1}
Let $H$ be a Hilbert space and $C$ a nonempty closed convex subset of $H$. Let $T : C \longrightarrow CC(C)$ be a *-$b$-enriched nonexpansive mapping such that there exists $p \in F(T)$ as $Tp=\{p\}$. There exist $\theta \in (0,1)$ such as for any $x_0 \in C$, the Krasnoselskii iteration $\{x_n\}_{n=0}^\infty$
\begin{equation}\label{Krasnoselsii0001}
x_{n+1}=(1-\theta) x_n+\theta y_n, n\geq 0, y_n \in P_T x_n
\end{equation} 
converges (weakly) to a fixed point of $T$.

\end{theorem}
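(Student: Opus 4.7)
The plan is to mirror the strategy of Theorem \ref{strong convergence} to reduce the iteration to one driven by a single-valued nonexpansive mapping, and then deploy the classical Opial-based weak convergence argument for Krasnoselskii iterates in Hilbert space.

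First I would set $\mu = 1/(b+1)$ and choose $\lambda \in (0,1)$ so that $\theta = \lambda\mu \in (0,1)$. Exactly as in the proof of Theorem \ref{strong convergence}, the *-$b$-enriched nonexpansive condition, together with the fact that every $Tx \in CC(C)$ is Chebyshev in the Hilbert space, yields that $P_T$ is single-valued and that the averaged operator $P_{T_\mu} = (1-\mu)I + \mu P_T$ is a single-valued nonexpansive selfmap of $C$; the chain of identifications worked out there also gives $F(P_{T_\mu}) = F(T)$. Under this change of variables the iteration \eqref{Krasnoselsii0001} becomes $x_{n+1} = (1-\lambda)x_n + \lambda P_{T_\mu} x_n$.

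Since $Tp = \{p\}$ forces $p \in F(P_{T_\mu})$, nonexpansiveness of $P_{T_\mu}$ gives $\|x_{n+1} - p\| \leq \|x_n - p\|$, so $\{x_n\}$ is bounded and $\lim \|x_n - p\|$ exists. Applying Lemma \ref{lambdapatrate} to the convex combination defining $x_{n+1}$ yields
\[
\lambda(1-\lambda)\|x_n - P_{T_\mu} x_n\|^2 \leq \|x_n - p\|^2 - \|x_{n+1} - p\|^2,
\]
and telescoping over $n$ forces $\|x_n - P_{T_\mu} x_n\| \to 0$.

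Boundedness of $\{x_n\}$ together with weak closedness of $C$ supplies a subsequence $x_{n_i} \rightharpoonup q \in C$. The main step, which I expect to be the principal obstacle, is upgrading this to $q \in F(T)$ via the demi-closedness of $I - P_{T_\mu}$ at zero: if one supposed $P_{T_\mu} q \neq q$, then Opial's property of $H$ would give $\limsup \|x_{n_i} - q\| < \limsup \|x_{n_i} - P_{T_\mu} q\|$, whereas the triangle inequality combined with $\|x_{n_i} - P_{T_\mu} x_{n_i}\| \to 0$ and the nonexpansiveness of $P_{T_\mu}$ yields $\limsup \|x_{n_i} - P_{T_\mu} q\| \leq \limsup \|x_{n_i} - q\|$, a contradiction; hence $q \in F(P_{T_\mu}) = F(T)$. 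To promote the subsequential weak limit to a limit of the whole sequence, I would note that every $q' \in F(T)$ satisfies $P_{T_\mu} q' = q'$, so Fejér monotonicity again gives existence of $\lim \|x_n - q'\|$; a second subsequential weak limit $q' \neq q$ would then, by Opial's property applied with the roles of $q$ and $q'$ swapped, produce the contradiction $\lim \|x_n - q\| < \lim \|x_n - q'\| < \lim \|x_n - q\|$. This forces $x_n \rightharpoonup q \in F(T)$.
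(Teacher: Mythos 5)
Your proposal is correct and follows the same overall architecture as the paper's proof: reduce the iteration to the single-valued nonexpansive averaged projection $P_{T_\mu}=(1-\mu)I+\mu P_T$ with $F(P_{T_\mu})=F(T)$, establish asymptotic regularity $\|x_n-P_{T_\mu}x_n\|\to 0$ via the Fej\'er-type inequality, extract a weak cluster point, and show it is a fixed point by a demiclosedness argument. Two differences are worth recording. First, where you invoke Opial's property to prove demiclosedness of $I-P_{T_\mu}$ at zero, the paper proves it directly from the Hilbert-space identity $\|x_{n_j}-P_{U_\lambda}p_0\|^2=\|x_{n_j}-p_0\|^2+\|p_0-P_{U_\lambda}p_0\|^2+2\langle x_{n_j}-p_0,\,p_0-P_{U_\lambda}p_0\rangle$ combined with the triangle-inequality bound $\limsup\bigl(\|x_{n_j}-P_{U_\lambda}p_0\|-\|x_{n_j}-p_0\|\bigr)\le 0$; in a Hilbert space these two devices are interchangeable, so this is a cosmetic distinction. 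Second, and more substantively, your closing step --- Fej\'er monotonicity with respect to \emph{every} fixed point plus the two-subsequence Opial argument to promote subsequential weak convergence to weak convergence of the whole sequence --- is more careful than what the paper actually writes: the paper asserts $F(T)=F(P_{T_\mu})=\{p\}$ (which does not follow from the stated hypothesis that \emph{some} $p\in F(T)$ satisfies $Tp=\{p\}$) and essentially stops after showing that one weak cluster point lies in $F(T)$. So your version reproduces the paper's route while repairing its weakest step.
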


\begin{proof}
Using same arguments as in Theorem \ref{strong convergence} $T_\mu$ is *-nonexpansive mapping and $P_{T_\mu}$ is nonexpansive mapping. 
Using Lemma \ref{lemma closed convex set Chebyshev}, \eqref{fixed points3} and \eqref{fixed points4} implies $F(T)=F(P_T)=F(P_{T_\mu})=F(T_\mu)=\{p\}$.\\
We will show that $\{x_{n_j}\}_{j=0}^\infty$ given by 
\[x_{{n_j}+1}=(1-\lambda)x_{n_j}+\lambda P_{T_\mu} x_{n_j}, \: j \geq 0\]
converges weakly to a certain $p_0$, then we will prove that $p_0$ is fixed point of $T_\mu$.\\
In the following we will define $U_\lambda=(1-\lambda)I+\lambda T_\mu=(1-\theta)I+\theta T=U_\theta$, where $\theta=\lambda \mu$ and similar for projection maps $P_{U_\lambda}=(1-\lambda)I+\lambda P_{T_\mu}=(1-\theta)I+\theta P_T=P_{U_\theta}$. \\

Now we have:
\[x-P_{U_\lambda} x=x- [(1-\lambda)x+\lambda P_{T_\mu} x]=\lambda (P_{T_\mu} x - x)\].
This implies immediately that:
\begin{equation}\label{asymptotically TU}
\|x-P_{U_\lambda} x\|=\lambda \|x-P_{T_\mu} x\|
\end{equation}.

From \eqref{convergence} and \eqref{asymptotically TU} we have: 
\begin{equation}\label{asymptotically U}
\|x_{n_j}-P_{U_\lambda} x_{n_j}\| \rightarrow 0, \: as \: j\to \infty
\end{equation}

Now we will suppose $\{x_{n_j}\}_{j=0}^\infty$ does not converge weakly to $p$, which is unique according hypothesis. But we will suppose that $\{x_{n_j}\}_{j=0}^\infty$ converges weakly to a certain $p_0 \in C$.\\
From the nonexpansivness of the  mappings $P_{U_\lambda}=P_{U_\theta}$ we get: \\

\[\|x_{n_j}-P_{U_\lambda} p_0\|\leq \|P_{U_\lambda}x_{n_j}-P_{U_\lambda} p_0\|+\|x_{n_j}-P_{U_\lambda} x_{n_j}\|\leq\|x_{n_j}- p_0\|+\|x_{n_j}-P_{U_\lambda} x_{n_j}\|\] 

From this inequality together with \eqref{asymptotically U}  we obtain:
\begin{equation}\label{limsup}
\limsup (\|x_{n_j}-P_{U_\lambda} p_0\|-\|x_{n_j}- p_0\|)\leq 0
\end{equation}
Hence, since  the weak convergence of $\{x_{n_j}\}_{j=0}^\infty$ to $p_0$, which is equivalent with $\langle x_{n_j}-p_0,p_0-P_{U_\lambda} p_0 \rangle=0 $, the following equality
\[\|x_{n_j}-P_{U_\lambda} p_0\|^2=\|(x_{n_j}-p_0)+(p_0-P_{U_\lambda} p_0)\|^2=\|x_{n_j}-p_0\|^2+\|p_0-P_{U_\lambda} p_0\|^2+2\langle x_{n_j}-p_0,p_0-P_{U_\lambda} p_0 \rangle\]
implies:
\begin{equation}\label{lim}
\lim_{j\to \infty}(\|x_{n_j}-P_{U_\lambda} p_0\|^2-\|x_{n_j}-p_0\|^2)=\|p_0-P_{U_\lambda} p_0\|^2
\end{equation}
Now the left side of \eqref{lim} becomes:
\begin{equation}\label{eqdipatr}
\|x_{n_j}-P_{U_\lambda} p_0\|^2-\|x_{n_j}-p_0\|^2=(\|x_{n_j}-P_{U_\lambda} p_0\|-\|x_{n_j}-p_0\|)(\|x_{n_j}-P_{U_\lambda} p_0\|+\|x_{n_j}-p_0\|)
\end{equation}
From \eqref{limsup}, \eqref{lim} and \eqref{eqdipatr} obtain:
\[\|p_0-P_{U_\lambda} p_0\|=0\]
This is equivalent with:
\[P_{U_\lambda} p_0=p_0\]
From this equality together with \eqref{asymptotically TU} we obtain:
\[p_0\in F(P_{U_\lambda})=F(P_{T_\mu})=F(P_T)=F(T_\mu)=F(T)\]

\end{proof}

In the next theorem the condition as there exists $p \in F(T)$ as $Tp=\{p\}$ will be remove.

\begin{theorem}\label{weak convergence2}
Let $H$ be a Hilbert space and $C$ a nonempty closed convex subset of $H$. Let $T : C \longrightarrow CC(C)$ be a *-$b$-enriched nonexpansive mapping. There exists $\theta \in (0,1)$ such as for any $x_0 \in C$, the Krasnoselskii iteration $\{x_n\}_{n=0}^\infty$
\begin{equation}\label{Krasnoselskii01}
x_{n+1}=(1-\theta) x_n+\theta y_n, n\geq 0, y_n \in P_T(x_n)
\end{equation} 
converges weakly to a fixed point of $T$.
\end{theorem}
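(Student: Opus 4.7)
The plan is to replicate the averaged-mapping reduction already carried out in the proof of Theorem \ref{strong convergence}, to extract from it the asymptotic regularity $\|x_n - P_{T_\mu} x_n\|\to 0$ (which, as that proof shows, does not use hemicompactness), and then to replace the hemicompactness step with the standard Opial/demiclosedness argument available in any Hilbert space. Concretely, set $\mu=\frac{1}{b+1}$, $\lambda\in(0,1)$ with $\theta=\lambda\mu$, and introduce the averaged mapping $T_\mu=(1-\mu)I+\mu T$. From the definition of a *-$b$-enriched nonexpansive mapping, $T_\mu$ is *-nonexpansive, its values are Chebyshev (being weakly compact and convex translates of $\mu Tx$), and $P_{T_\mu}$ is a single-valued nonexpansive selector satisfying $F(P_{T_\mu})=F(T_\mu)=F(T)$ via Lemma \ref{lemma projector operators} and identities \eqref{fixed points1}--\eqref{fixed points4}. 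The iteration \eqref{Krasnoselskii01} rewrites as $x_{n+1}=(1-\lambda)x_n+\lambda P_{T_\mu}x_n$.

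Next, I would invoke Theorem \ref{existence theorem} to guarantee $F(T)\neq\emptyset$, which removes the need for the singleton hypothesis used in Theorem \ref{weak convergence1}. For any fixed $p\in F(T)=F(P_{T_\mu})$, the nonexpansiveness of $P_{T_\mu}$ yields $\|x_{n+1}-p\|\le\|x_n-p\|$, so $\lim_n\|x_n-p\|$ exists and $\{x_n\}$ is bounded. The telescoping argument from \eqref{ineg1}--\eqref{ineg4} of Theorem \ref{strong convergence} (which used only nonexpansiveness of $P_{T_\mu}$, not hemicompactness) delivers
\[
\sum_{n=0}^{\infty}\|x_n-P_{T_\mu}x_n\|^{2}<\infty,\qquad\text{hence}\qquad\|x_n-P_{T_\mu}x_n\|\to 0.
\]

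Now comes the main point, which replaces the hemicompactness step: show that every weak cluster point of $\{x_n\}$ lies in $F(P_{T_\mu})=F(T)$. Since $\{x_n\}$ is bounded, there is a subsequence $x_{n_i}\rightharpoonup p_0\in C$. Suppose, for contradiction, that $p_0\neq P_{T_\mu}p_0$. Using $\|x_{n_i}-P_{T_\mu}x_{n_i}\|\to 0$ together with the nonexpansiveness of $P_{T_\mu}$,
\[
\|x_{n_i}-P_{T_\mu}p_0\|\le\|x_{n_i}-P_{T_\mu}x_{n_i}\|+\|P_{T_\mu}x_{n_i}-P_{T_\mu}p_0\|\le\|x_{n_i}-P_{T_\mu}x_{n_i}\|+\|x_{n_i}-p_0\|,
\]
so $\limsup_i\|x_{n_i}-P_{T_\mu}p_0\|\le\limsup_i\|x_{n_i}-p_0\|$. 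This contradicts Opial's property of $H$, which would force $\limsup_i\|x_{n_i}-p_0\|<\limsup_i\|x_{n_i}-P_{T_\mu}p_0\|$. Hence $p_0\in F(P_{T_\mu})=F(T)$.

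Finally, to upgrade subsequential weak convergence to full weak convergence, I would run the standard Opial uniqueness argument exactly as in the end of the proof of Theorem \ref{weaklyconvbenm}: if two subsequences $x_{n_i}\rightharpoonup p_0$ and $x_{n_k}\rightharpoonup p_1$ with $p_0,p_1\in F(T)$ and $p_0\neq p_1$, then the existence of $\lim_n\|x_n-p_0\|$ and $\lim_n\|x_n-p_1\|$ together with Opial's inequality applied to each subsequence yields the circular strict inequality
\[
\lim_n\|x_n-p_0\|<\lim_n\|x_n-p_1\|<\lim_n\|x_n-p_0\|,
\]
a contradiction. Therefore the entire sequence $\{x_n\}$ converges weakly to a fixed point of $T$. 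I expect the only delicate step is the demiclosedness-via-Opial argument in the third paragraph; everything else is either recycled verbatim from Theorems \ref{strong convergence} and \ref{existence theorem} or is a standard Hilbert-space manipulation.
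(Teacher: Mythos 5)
Your proposal is correct, and the reduction to the nonexpansive selector $P_{T_\mu}$, the Fej\'er monotonicity, and the asymptotic regularity $\|x_n-P_{T_\mu}x_n\|\to 0$ all coincide with the paper's argument; where you genuinely diverge is in how you pass from boundedness plus asymptotic regularity to weak convergence. You use Opial's property twice: once as a demiclosedness principle (any weak cluster point $p_0$ of $\{x_{n_i}\}$ satisfies $p_0=P_{T_\mu}p_0$, since otherwise $\limsup_i\|x_{n_i}-P_{T_\mu}p_0\|\le\limsup_i\|x_{n_i}-p_0\|$ contradicts Opial), and once to rule out two distinct weak subsequential limits. The paper instead runs the Browder-style asymptotic-center argument: it defines $g(p)=\lim_n\|x_n-p\|$ on $F(P_{T_\mu})$, forms the weakly compact level sets $F_\epsilon=\{y:g(y)\le d_0+\epsilon\}$, shows via the parallelogram identity that $F_0=\bigcap_{\epsilon>0}F_\epsilon$ is a singleton $\{p_0\}$, and then identifies every weak subsequential limit with $p_0$ through the expansion $g^2(p)+\|p-p_0\|^2=g^2(p_0)$; to know that such a subsequential limit is a fixed point at all, the paper appeals to the argument of Theorem \ref{weak convergence1}. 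Your route is shorter, self-contained, and avoids that appeal (which is slightly delicate, since Theorem \ref{weak convergence1} carries the extra hypothesis $Tp=\{p\}$ that Theorem \ref{weak convergence2} is meant to drop); the paper's route has the advantage of not depending on Opial's property and so would survive in uniformly convex Banach spaces without it, though in the Hilbert setting both are available. One shared caveat: both you and the paper need $F(T)\neq\emptyset$ from Theorem \ref{existence theorem}, whose proof uses weak compactness of $C$ and hence implicitly boundedness, an assumption not listed in the statement; this is not a defect specific to your argument.
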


\begin{proof}

Using same arguments from theorem's \ref{existence theorem} proofing yield $Tx$ is a Cebyshev set i.e. $y_n=P_T x_n$ in \eqref{Krasnoselskii01}.
Let $\lambda \in(0,1)$ such as $\theta = \frac{\lambda}{b+1}$. The iteration process \eqref{Krasnoselskii01} is equivalent with

\begin{equation}\label{Krasnoselskii1}
x_{n+1}=(1-\frac{\lambda}{b+1}) x_n+\frac{\lambda}{b+1} P_Tx_n, n\geq0
\end{equation} 
equivalent with

\begin{equation}\label{Krasnoselskii2}
x_{n+1}=(1-\lambda) x_n+\lambda(\frac{b}{b+1}I+ \frac{1}{b+1}P_T)x_n, n\geq0
\end{equation}

  Let consider the averaged mapping $T_\mu :C \rightarrow CC(C)$
\begin{equation} \label{averaged operator3.1}
T_\mu=(1-\mu)I+\mu T, \mu \in(0,1)
\end{equation} 
As it is shown in proof of Theorem \ref{existence theorem} for each $x \in C$ the images $Tx$ and $T_\mu x$ are Cebyshev sets. Consequently the mappings $P_T$ and $P_{T_\mu}$ are single valued mappings.\\
Substituting $\mu=\frac{1}{b+1}$ and $1-\mu=\frac{b}{b+1}$ in \eqref{Krasnoselskii2} the iteration process become:
\begin{equation}\label{Krasnoselskii3}
x_{n+1}=(1-\lambda) x_n+\lambda((1-\mu )I+ \mu P_ T)x_n, n\geq0
\end{equation}
Substituting \eqref{averaged operator3.1} in \eqref{Krasnoselskii3} we obtain:

\begin{equation}\label{Krasnoselskii4}
x_{n+1}=(1-\lambda) x_n+\lambda P_{T_\mu} x_n, n\geq0
\end{equation}

From \eqref{nonexpansive001} result the mapping $T_\mu$ is *-nonexpansive mapping. Hence $P_{T_\mu}$ is nonexpansive mapping. \\
Let $F(P_{T_\mu})$ fixed points set of $P_{T_\mu}$.
The set $F(P_{T_\mu})$ is nonempty and convex. Let $p \in F(P_{T_\mu})$. From nonexpansivness of $P_{T_\mu}$ and iterative process \eqref{Krasnoselskii4} we have:

\begin{equation}\label{nonexpansive2}
\begin{split}
\|x_{n+1}-p\| {}&=\|(1-\lambda) x_n+\lambda P_{T_\mu} x_n-p\|=\|(1-\lambda) x_n-p+\lambda p+\lambda P_{T_\mu} x_n-\lambda p\|  \\{}&=\|(1-\lambda)(x_n-p)+\lambda(P_{T_\mu} x_n -p)\| =\|(1-\lambda)(x_n-p)+\lambda(P_{T_\mu} x_n -P_{T_\mu} p)\|  \\{}& \leq (1-\lambda)\|x_n-p\|+\lambda \|P_{T_\mu} x_n -P_{T_\mu} p\| \leq (1-\lambda)\|x_n-p\|+\lambda \| x_n - p\|=\|x_n-p\| 
\end{split}
\end{equation}

From \eqref{nonexpansive2} result 
\[\|x_{n+1}-p\| \leq \| x_n - p\|\]
hence the function
\[g(p)=\lim_{n \to \infty}\|x_n -p\|\]
is well defined and lower semicontinous convex function in $F(P_{T_\mu})$. Let 
\[d_0=inf\{g(p):p\in F(P_{T_\mu})\}\]
There we can be define a family of sets, thus for each $\epsilon >0$
\[F_\epsilon =\{y:g(y)\leq d_0+\epsilon \}\] closed, convex, nonempty, and bounded. In consequence each set from family is weakly compact.
Hence $\underset{\epsilon>0}  {\cap} F_\epsilon \neq \emptyset$. We will denote
\[\underset{\epsilon>0}  {\cap} F_\epsilon=\{y:g(y)=d_0\}\equiv F_0\]
Now we will proof the set $F_0$ have exactly one point.
We suppose there are two points $p_0, p_1\in F_0$. From $F_0$ convex and closed result there exists $p_\lambda=(1-\lambda)p_0+\lambda p_1, \lambda\in(0,1)$ thus as $p_\lambda \in F_0$ (i.e. $g(p_\lambda)=d_0$)
\begin{equation}\label{gpl}
\begin{split}
g^2(p_\lambda) {}& =\lim_{n \to \infty}\|p_\lambda-x_n\|^2=\lim_{n \to \infty}\|(1-\lambda)(p_0-x_n)+\lambda (p_1-x_n)\|^2  \\{}& = \lim_{n \to \infty}((1-\lambda)^2\|p_0-x_n\|^2+\lambda ^2 \|p_1-x_n)\|^2 +2\lambda(1-\lambda)\langle p_0-x_n,p_1-x_n \rangle)  \\{}& = \lim_{n \to \infty}((1-\lambda)^2\|p_0-x_n\|^2+\lambda ^2  \|p_1-x_n)\|^2+2\lambda (1-\lambda)\|p_0-x_n\| \cdot\|p_1-x_n\|)  \\{}& +2\lambda (1-\lambda)\lim_{n \to \infty}(\langle p_0-x_n,p_1-x_n \rangle-\|p_0-x_n\| \cdot \|p_1-x_n\|) \\{}& = g^2(p)+2\lambda (1-\lambda)\lim_{n \to \infty}(\langle p_0-x_n,p_1-x_n \rangle-\|p_0-x_n\| \cdot \|p_1-x_n\|)
\end{split}
\end{equation}
From \eqref{gpl} and $g(p_\lambda)=g(p)=d_0$ yields 
\begin{equation}\label{prodscal}
\lim_{n \to \infty}(\langle p_0-x_n,p_1-x_n \rangle-\|p_0-x_n\| \cdot \|p_1-x_n\|)=0
\end{equation}

From $p_0, p_1\in F_0$ hence $\|p_1-x_n\| \rightarrow d_0$, $\|p_0-x_n\| \rightarrow d_0$ and \eqref{prodscal} we get:
\[\|p_1-p_0\|^2=\|(p_1-x_n)+(x_n-p_0)\|^2=\|p_1-x_n\|^2+\|x_n-p_0\|^2-2\langle p_1-x_n, p_0-x_n\rangle \to d_0^2+d_0^2-2d_0^2=0\]\\
This proved that $F_0$ has exactly one point. The iteration sequence from \eqref{mapU} is equivalent with the following iterative process: $x_n=P_{U_\theta ^n} x_0$. Now we will show that $x_n \rightharpoonup p_0$. We will consider a subsequence $x_{n_j}$ as $j\to\infty$ of $x_n$
Will assume $x_{n_j}\rightharpoonup p$ according Theorem \ref{weak convergence1} as $p\in F(T)$. From $x_{n_j}\rightharpoonup p$ and definition of $g$ we have:\\

\[\|x_{n_j}-p_0\|^2=\|(x_{n_j}-p)-(p_0-p)\|^2=\|x_{n_j}-p\|^2+\|p-p_0\|^2-2\langle x_{n_j}-p, p_0-p\rangle\]
Using $x_{n_j}\rightharpoonup p$, which is equivalent with $\langle x_{n_j}-p, p_0-p\rangle=0$, we obtain the following:
\[g^2(p)+\|p-p_0\|^2=g^2(p_0)=d_0^2\]
This equality together with $g^2(p)\geq d_0^2$ yields:
\[\|p-p_0\|\leq 0\]
which is equivalent with conclusion $p=p_0$.

\end{proof}

\section{Conclusions}
In this paper we improve some impressed results from the class of single valued b-enriched nonexpansive mappings by turning them into the classes of multivalued b-enriched nonexpansive mappings, under the Hilbert space settings. \par
For this purpose, we introduced and studied the class of *-$b$-enriched nonexpansive mappings, by applying the enrichment method \cite{Vasile Berinde 2010} to the *-nonexpansive mappings, introduced in \cite{Abdul Rahim Khan}.\par
The challenge have been approached by turning the class of $b$-enriched nonexpansive mappings into $b$-enriched multivalued nonexpansive mappings \cite{Mujahid Abbas Rizwan Anjum Vasile Berinde} and *-$b$-enriched nonexpansive mappings.
 It is evident that multivalued nonexpansive mapping is $0$-enriched multivalued nonexpansive mapping and *-nonexpansive mapping is *-$0$-enriched nonexpansive mapping.

 The Theorem \ref{closed and convex fixed point} extends Browder's Theorem \cite{Browder FE} to the *-$b$-enriched nonexpansive mappings.
 The Theorem \ref{existence theorem} highlights the conditions to ensure the fixed point existence for *-$b$-enriched nonexpansive mappings.\par
 We used Krasanoselskii iteration method to approximate the fixed points of $b$-enriched multivalued mappings and *-$b$-enriched nonexpansive mappings. The convergence theorems establish the framework conditions, under which the Krasnoselskii sequence \eqref{Krasnoselsii00} for $b$-enriched multivalued mappings or \eqref{Krasnoselskii1} for *-$b$-enriched nonexpansive mappings converges strongly (the Theorem \ref{strongconvbemnm}, the Corrolary \ref{strongconvbemnm01} and the Theorem \ref{strong convergence})  or weakly (the Theorem \ref{weaklyconvbenm}, the Theorem \ref{weak convergence1} and the Theorem \ref{weak convergence2}).\par

 The results are obtained in the Hilbert space, opening the field to extend these results in the other spaces.

\section*{}
 
\end{document}